\theoremstyle{remark}
\newtheorem{para}{\bf}[subsection]
\newtheorem{example}[para]{\bf Example}
\newtheorem{rem}[para]{\bf Remark}
\theoremstyle{definition}
\newtheorem{dfn}[para]{Definition}
\theoremstyle{plain}
\newtheorem{thm}[para]{Theorem}
\newtheorem{prop}[para]{Proposition}
\newcommand*\bdot{{\mathpalette\bdot@{.9}}}
\newcommand*\bdot@[2]{\mathbin{\vcenter{\hbox{\scalebox{#2}{$\m@th#1\bullet$}}}}}
\begin{document}

\title{Rational Exceptional Belyi Coverings}
\author{Cemile Kurkoglu}
\address{Denison University, OH, U.S.A.}
\email{kurkogluc@denison.edu}

\begin{abstract} Exceptional Belyi covering is a connected Belyi covering uniquely determined by its ramification scheme or the respective dessin d’enfant. We focus on rational exceptional Belyi coverings of compact Riemann surfaces of genus 0. Well known examples are cyclic, dihedral, and Chebyshev coverings. Using Maple, we identified all rational exceptional Belyi coverings up to degree 15. Their Belyi functions were calculated for degrees up to 6 along with some for degree 7. We also found new infinite series. 
\end{abstract}

\maketitle

\tableofcontents

\section{Introduction}

A \textit{Belyi covering} is a ramified covering of the Riemann sphere with ramification points belonging to the set $\{0,\,1,\,\infty\}\,.$ The notion \textit{dessin d’enfant} which is a French synonym for ``child-drawing” to study Belyi coverings. The following example from \cite{SinSyd} is a dessin d’enfant and gives us an idea about why this name is indeed chosen:
\begin{figure}[htp]
\includegraphics[width=30mm]{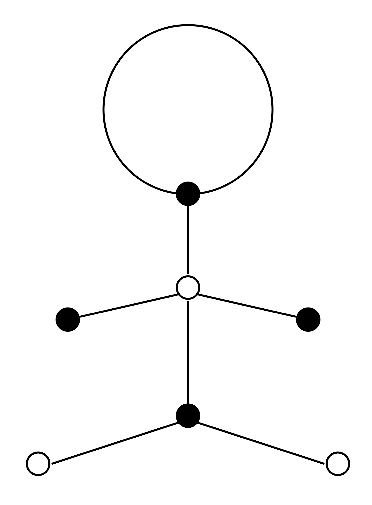}
\end{figure}

\subsection{Overview}

In Chapter 2, we start with some preliminaries about ramified coverings and then give the definition of Belyi coverings. We give examples of Shabat polynomials \cite{Sha}. We also state the famous Belyi’s Theorem saying that there exists a Belyi function on a compact surface that can be defined over a number field as an algebraic curve \cite{Belyi}. Later, we introduce dessin d'enfants as some connected bipartite maps. They have a topological structure too as mentioned in \cite{GirGon} and \cite{SinSyd}. The theory of dessins was first introduced by Grothendieck \cite{Groth}. Dessins and Belyi coverings are in one-to-one correspondence \cite{Zvo}. We also mention the action of the absolute Galois group on dessins as in \cite{LanZvo}, \cite{GirGon} annd \cite{CouGran}. We conclude this chapter by giving examples from \cite{ShaVoe}, \cite{BetZvo} and \cite{Zvo}.\\

Next we deal with how to count these coverings in Chapter 3. We define the Eisenstein number of a covering  \cite{LieSha}. Dessin of a polynomial covering is a bicolored plane tree. They were first studied by G. Shabat \cite{Sha}. Counting these trees is a combinatorial problem solved by Tutte \cite{Tut}. We then give a formula for counting coverings with given ramification schemes. This formula includes both Eisentein number and irreducible characters of symmetric groups \cite{KlyKur}. Preliminaries from representation theory can be found in \cite{Kurk}. \\

In Chapter 4; we focus on Belyi coverings that are uniquely determined by either ramification schemes or by respective dessin d'enfants. We call them \textit{exceptional Belyi coverings}. We characterize them with their ramification indices in fibers 0,1, and $\infty$. We give the well-known examples that are cyclic, dihedral and Chebyshev coverings. Cyclic coverings, dihedral coverings and the coverings of regular polyhedra (\cite{CouGran}, \cite{MagZvo}) are known as Klein's coverings. We found a number of rational exceptional coverings which include Klein's coverings and Chebyshev polynomials. We also found new infinite series. We give a classification of exceptional polynomial coverings by Adrianov (\cite{A09a}). Then we state the formula to count the exceptional Belyi coverings. Lastly, we talk about the \textit{field of definition}, which is the smallest field in which both the Riemann surface, or equivalently the corresponding algebraic curve, and the coefficients of the covering are defined. We state the field of definition of a rational exceptional Belyi covering with the help of theorems in \cite{Serre2}  and \cite{Serre3}. We conclude with our Maple algorithm finding all coverings with a given genus and degree. A table of all rational exceptional Belyi coverings up to degree 6 with ramification schemes, Belyi functions and dessins d'enfants and some of the ones that are of degree 7 is given. 

\section{Belyi coverings}

\subsection{Preliminaries}
Let $X$ and $Y$ be two manifolds.
\begin{dfn}
    A continuous map $p : Y \rightarrow X$ is said to be a \textit{covering} of $X$ if for every $x \in X\,,$ there exists an open neighborhood $U$ of $x$ such that
    $p^{-1}(U)$ is the disjoint union of open sets $\{V_i\}_{i\in\mathbb{R}}$, where $p|_{Vi}$ is a homeomorphism.
\end{dfn}
\begin{rem}
    Ramified coverings from one Riemann surface to another take the form of holomorphic maps that locally look like $x^k\,.$
\end{rem}
\begin{dfn}
    A map $p : Y \rightarrow X$ is said to be a \textit{ramified covering} of $X$ if there exist finite subsets $S\subset Y\,,$ $T \subset X$ such that $p|_{Y\setminus S} :Y \setminus S \rightarrow X\setminus T$ is a covering of $X \setminus T\,.$
\end{dfn}
Let $\mathbb{P}^{1}$ denote the Riemann sphere and let $S$ be a compact Riemann surface.
\begin{prop}
 A non-constant meromorphic function $f: S \rightarrow \mathbb{P}^{1}$ is a \textit{ramified covering} of $\mathbb{P}^{1}$.   
\end{prop}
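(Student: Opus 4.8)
The plan is to reduce the statement to two standard ingredients: the local normal form for non-constant holomorphic maps, and the point-set fact that a proper local homeomorphism is a covering map. First I would recall that a non-constant meromorphic function $f$ on the compact Riemann surface $S$ is the same datum as a non-constant holomorphic map $f\colon S\to\mathbb{P}^{1}$, the poles of $f$ being exactly the fibre over $\infty$, so that $f$ is holomorphic in the relevant charts at every point. Then I would invoke the local normal form: near any point $s\in S$ there are holomorphic coordinates in which $f$ takes the shape $z\mapsto z^{e_s}$ for a unique integer $e_s\ge 1$. Since $S$ is connected and $f$ is non-constant, the ramification locus $R=\{s\in S : e_s\ge 2\}$ is discrete, hence finite by compactness of $S$; likewise every fibre $f^{-1}(x)$ is discrete, hence finite. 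Openness of $f$ (again from the normal form) together with compactness of $S$ shows that $f(S)$ is open and closed in the connected surface $\mathbb{P}^{1}$, so $f$ is surjective. I then set $T=f(R)\subset\mathbb{P}^{1}$, a finite set, and $R'=f^{-1}(T)$, which is finite because $T$ is finite and all fibres are finite; note $S\setminus R'=f^{-1}(\mathbb{P}^{1}\setminus T)$.

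Next I would verify that $f$ restricts to a genuine (unramified) covering $f\colon S\setminus R'\to\mathbb{P}^{1}\setminus T$, which is exactly the data required by the definition of ramified covering above, with $R'$ and $T$ in the roles of the finite sets there. On $S\setminus R'$ we have $e_s=1$ everywhere, so by the normal form $f$ is a local homeomorphism there; and $f$ restricted to $S\setminus R'$ is proper, since for compact $K\subset\mathbb{P}^{1}\setminus T$ the preimage $f^{-1}(K)$ is closed in the compact space $S$, hence compact, and is disjoint from $R'$ because $K\cap T=\varnothing$. Thus $f|_{S\setminus R'}$ is a proper local homeomorphism of Hausdorff spaces onto the locally compact, locally connected space $\mathbb{P}^{1}\setminus T$.

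It remains to deduce the covering property, and this is the step I expect to be the only non-formal one. Given $x\in\mathbb{P}^{1}\setminus T$, the fibre is a finite set $\{s_1,\dots,s_d\}$; one chooses pairwise disjoint open neighbourhoods $U_i\ni s_i$ on which $f$ is a homeomorphism onto an open set, and then uses properness to produce a connected open $V\ni x$ with $f^{-1}(V)\subset\bigcup_i U_i$ (if no such $V$ existed, a sequence argument against properness and the Hausdorff property gives a contradiction). One checks $f^{-1}(V)=\bigsqcup_i\bigl(U_i\cap f^{-1}(V)\bigr)$ with each piece mapped homeomorphically onto $V$, so $V$ is evenly covered; connectedness of $\mathbb{P}^{1}\setminus T$ then forces the number of sheets $d$ to be constant. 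The hard part is precisely this uniform production of evenly covered neighbourhoods, and the place where it succeeds is properness, i.e. the compactness of $S$; the rest of the argument is bookkeeping with the local normal form. I would also remark that non-constancy of $f$ is essential already at the normal-form stage, since a constant map is not a ramified covering.
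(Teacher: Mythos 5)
Your argument is correct and complete: the paper states this proposition without proof (it is a standard fact), and your route via the local normal form $z\mapsto z^{e_s}$, finiteness of the ramification locus and of fibres by compactness, and the fact that a proper local homeomorphism onto the locally compact, connected space $\mathbb{P}^{1}\setminus T$ is a covering map, is exactly the standard way to establish it against the paper's definition of ramified covering. The only point worth flagging is that you correctly identify properness (i.e.\ compactness of $S$) as the essential ingredient in producing evenly covered neighbourhoods, which is indeed where the argument would fail for a non-compact source.
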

Now let $f: S \rightarrow \mathbb{P}^{1}$ be a covering of the Riemann sphere ramified at $k$ points $s_{1}, \ldots, s_{k}$ with degree $n\,.$  The fundamental group is $\left\langle\gamma_{1}, \gamma_{2}, \ldots, \gamma_{k} \mid \gamma_{1} \gamma_{2} \ldots \gamma_{k}=i d\right\rangle$ and the monodromy group, as a subgroup of $S_n$ is $\left\langle g_{1}, g_{2}, \ldots, g_{k}\right| g_{1} g_{2} \ldots g_{k}=$ $i d\rangle\,.$ For each fiber $f^{-1}\left(s_{i}\right)$, let the cycle structure of $g_{i}$ be denoted by $\lambda_{i}\,.$ $\lambda_{i}$ 's are the \textit{ramification indices} and the expression $\left[\lambda_{1}\right]\left[\lambda_{2}\right] \ldots\left[\lambda_{k}\right]$ is called the \textit{ramification scheme} of the covering.

\subsection{Belyi coverings and Belyi theorem}

\begin{dfn} Let $S_g$ be a compact Riemann surface with genus $g$\,. A {\it Belyi covering} $\beta: S_g \to \mathbb{P}^1$ is a ramified covering of $\mathbb{P}^1$ with ramification points in $\{0,\,1,\,\infty\}\,.$
\end{dfn}
\vskip5pt
\begin{dfn}
    Let  $\beta: S_{g} \rightarrow \mathbb{P}^{1}$ be a Belyi covering.
\begin{itemize}
    \item If $g=0, \beta$ is called a \textit{rational} Belyi covering.
    \item If $g=1, \beta$ is called an \textit{elliptic} Belyi covering.
    \item If $g>1, \beta$ is called a \textit{hyperbolic} Belyi covering.
\end{itemize}
\end{dfn}

\vskip5pt

\begin{dfn}
    We will call $\beta$ a {\it Belyi function} when the genus $g = 0$ and in other cases, we will not only express $\beta$, instead we will write $(S_g,\,\beta)$ and call it a {\it Belyi pair}.
\end{dfn}

\begin{example} The following are some examples of Belyi functions:
\vspace{0.1cm}
\begin{itemize}
\item  Let $S=\mathbb{P}^{1}$.

$$
\beta: z \mapsto z^{n}
$$ \\
\item If $S=\mathbb{P}^{1}$, then consider the \textit{Belyi polynomial}

$$
\beta_{m, n}=z \mapsto \frac{1}{\mu} z^{m}(1-z)^{n}
$$
where $\mu=\frac{m^{m} n^{n}}{(m+n)^{m+n}}\,.$ \\
\item The $n$-th Chebyshev polynomial 
$T_{n}(z)=\cos n(\arccos z)$

\end{itemize}
   
\end{example}

\begin{rem} The polynomials above are ramified at $\infty\,.$ They are \textit{Shabat polynomials} as they are in general polynomials with at most two critical values.
\end{rem}

\vskip5pt

\begin{thm}
  {\bf (Belyi)} Let $S$ be a compact Riemann surface. The following statements are equivalent:

(a) $S$ is defined over $\overline{\mathbb{Q}}$.

(b) $S$ admits a meromorphic function $f: S \rightarrow \mathbb{P}^{1}$ with at most three ramification points.  
\end{thm}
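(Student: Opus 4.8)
The plan is to treat the two implications separately. The direction $(b)\Rightarrow(a)$ is a descent statement, soft in nature; the direction $(a)\Rightarrow(b)$ is the constructive core, traditionally called \emph{Belyi's algorithm}, and is where essentially all the work lies.

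\textbf{The easy direction $(b)\Rightarrow(a)$.} Suppose $f\colon S\to\mathbb{P}^1$ is ramified only over $\{0,1,\infty\}$. Put $U=\mathbb{P}^1\setminus\{0,1,\infty\}$ and restrict $f$ to a finite topological covering of $U$; by the Riemann existence theorem this is the analytification of a finite \'etale morphism of smooth algebraic curves over $\mathbb{C}$. Since $U$ is defined over $\mathbb{Q}$ and, in characteristic zero, the category of finite \'etale covers of $U$ does not change under extension of the algebraically closed base field, the cover descends to a finite \'etale cover over $U_{\overline{\mathbb{Q}}}$; its smooth projective completion is a model of $S$ over $\overline{\mathbb{Q}}$. (Alternatively, argue by Galois descent: for each degree there are only finitely many coverings of $\mathbb{P}^1$ ramified over $\{0,1,\infty\}$, and the triple $\{0,1,\infty\}$ is $\mathrm{Aut}(\mathbb{C})$-stable, so $(S,f)$ has finitely many $\mathrm{Aut}(\mathbb{C}/\overline{\mathbb{Q}})$-conjugates and Weil's descent criterion for curves applies.)

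\textbf{The hard direction $(a)\Rightarrow(b)$.} Assume $S$ is defined over $\overline{\mathbb{Q}}$ and choose any non-constant rational function $f\colon S\to\mathbb{P}^1$ with coefficients in $\overline{\mathbb{Q}}$; let $B\subset\mathbb{P}^1(\overline{\mathbb{Q}})$ be its finite set of critical values. The idea is to post-compose $f$ with polynomial maps $\mathbb{P}^1\to\mathbb{P}^1$ having rational coefficients — which never enlarges the field of definition and changes $B$ only in a controlled way — until $B\subseteq\{0,1,\infty\}$. \emph{Step 1 (making the critical values rational).} Let $d$ be the largest degree $[\mathbb{Q}(b):\mathbb{Q}]$ occurring for $b\in B$. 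If $d\ge 2$, pick $b\in B$ with $[\mathbb{Q}(b):\mathbb{Q}]=d$, let $m\in\mathbb{Q}[x]$ be its minimal polynomial, and replace $f$ by $m\circ f$. By the chain rule, the new critical values are the images under $m$ of the old ones together with the critical values of $m$ itself; the whole Galois orbit of $b$ collapses to $0$, the critical values of $m$ have degree at most $\deg m'=d-1$, and (by Galois-equivariance of $m$) no new Galois orbit of degree $d$ appears. Thus the pair $\bigl(d,\ \#\{\text{degree-}d\text{ orbits in }B\}\bigr)$ strictly decreases in the lexicographic order, so after finitely many such steps $B\subseteq\mathbb{P}^1(\mathbb{Q})$.

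\textbf{Step 2 and the main obstacle.} \emph{Step 2 (reducing to three critical values).} If $|B|\le 3$, a Möbius transformation over $\mathbb{Q}$ sends $B$ into $\{0,1,\infty\}$ and $f$ becomes a Belyi function. Otherwise, applying elements of the group generated by $z\mapsto 1-z$ and $z\mapsto 1/z$ (all defined over $\mathbb{Q}$, fixing $\{0,1,\infty\}$ setwise), arrange $\{0,1,\infty\}\subseteq B$ and pick $\lambda\in B\setminus\{0,1,\infty\}$ normalized to lie in $(0,1)$, say $\lambda=m/(m+n)$ with $m,n\in\mathbb{Z}_{\ge1}$. The Belyi polynomial $\beta_{m,n}$ from the Example above sends $0,1\mapsto 0$, $\infty\mapsto\infty$, $\lambda\mapsto 1$, and has critical values exactly $\{0,1,\infty\}$ (this is what the constant $\mu$ is designed to guarantee, and is the one concrete computation in the argument). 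Hence $\beta_{m,n}\circ f$ still has all critical values in $\mathbb{P}^1(\mathbb{Q})$, but the four points $\{0,1,\infty,\lambda\}$ have been merged into $\{0,1,\infty\}$, so $|B|$ strictly decreases; iterating brings us to the case $|B|\le 3$. I expect the main obstacle to be not any single computation but the correct choice of termination measures — the lexicographic pair in Step~1 and the cardinality $|B|$ in Step~2 — together with the verification that post-composition really transforms $B$ as claimed, in particular that no bad critical values are reintroduced.
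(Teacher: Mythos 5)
The paper offers no proof of this theorem, only a pointer to \cite{Kurk}, \cite{GirGon} and \cite{Kock}, and your sketch is exactly the standard argument those sources give: the Riemann existence theorem plus descent of finite covers of $\mathbb{P}^1\setminus\{0,1,\infty\}$ for $(b)\Rightarrow(a)$, and Belyi's algorithm for $(a)\Rightarrow(b)$, composing with minimal polynomials to rationalize the critical values and then with the polynomials $\beta_{m,n}$ to merge them into $\{0,1,\infty\}$. Your two termination measures (the lexicographic pair $(d,\#\{\text{degree-}d\text{ orbits}\})$ in Step 1, the cardinality $|B|$ in Step 2) and the verification that $\beta_{m,n}$ has critical values in $\{0,1,\infty\}$ and sends $\{0,1,\infty,\lambda\}$ into $\{0,1,\infty\}$ are the genuinely delicate points, and you have handled them correctly, so the proposal is sound and matches the cited proof in approach.
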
 

\begin{proof}
The proof can be found in \cite{Kurk} which is based on the ideas in \cite{GirGon} and \cite{Kock}.
\end{proof}

\vskip5pt

\subsection{Dessins d’enfants and Belyi coverings}
\begin{dfn}
 A \textit{dessin d'enfant}, or simply a \textit{dessin}, is a pair $(X, \mathcal{D})$ where $X$ is an oriented compact topological surface, and $\mathcal{D} \subset X$ is a finite graph such that:

(i) $\mathcal{D}$ is connected.

(ii) $\mathcal{D}$ is bicoloured.

(iii) $X \backslash \mathcal{D}$ is the union of finitely many topological discs, which we call faces of $\mathcal{D}$.   
\end{dfn}

 A dessin is a connected bipartite map with a topological structure. When the underlying surface is clear, we simply express a dessin as $\mathcal{D}$.
The genus of $(X, \mathcal{D})$ is simply the genus of the topological surface $X$.

\begin{dfn} We consider two dessins $\left(X_{1}, \mathcal{D}_{1}\right)$ and $\left(X_{2}, \mathcal{D}_{2}\right)$ equivalent when there exists an orientation-preserving homeomorphism from $X_{1}$ to $X_{2}$ whose restriction to $\mathcal{D}_{1}$ induces an isomorphism between the coloured graphs $\mathcal{D}_{1}$ and $\mathcal{D}_{2}$.
\end{dfn}

Now suppose that the edges of the dessin are numbered from the set $\Omega=$ $\{1,2,3, \ldots\}$. Each edge joins a black vertex to a white vertex, and incident with every black vertex, we have some of these edges. Using the anticlockwise orientation of the surface, we get a cyclic permutation of these edges. Thus if we have $b$ black vertices, we have a permutation $\sigma_{0}$ that is a product of $b$ disjoint cycles. Similarly, if we have $w$ white vertices then we get a permutation $\sigma_{1}$ consisting of $w$ disjoint cycles. The permutation $\sigma_{2}:=\left(\sigma_{0} \sigma_{1}\right)^{-1}$ describes the edges going around a face, each cycle of length $u$ corresponds to a $2 u$-gonal face.

\begin{example}
    Let $\sigma_{0}=(1248)(365)(7)$ and $\sigma_{1}=(1)(23)(4567)(8)$. So $\sigma_{2}=$ $(18473)(25)(6)$ and the corresponding diagram is the following:
\begin{figure}[htp]
\includegraphics[width=40mm]{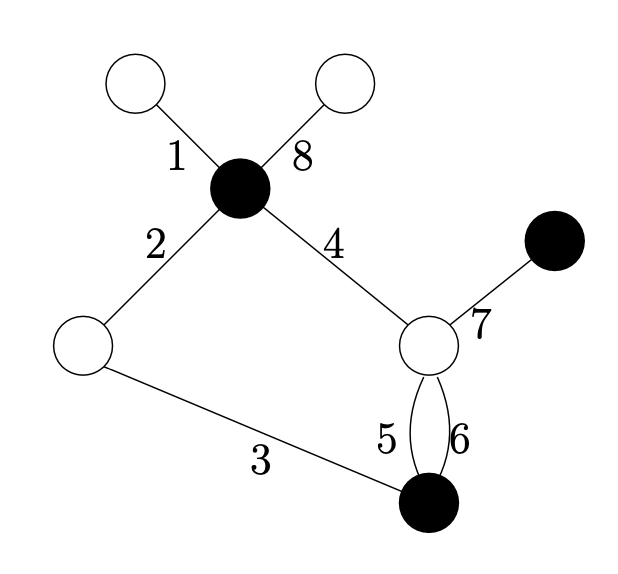}
\end{figure}
\end{example}

\begin{dfn}
    $\left\langle\sigma_{0}, \sigma_{1}\right\rangle$ is called the \textit{permutation representation pair} of the dessin.
\end{dfn}

\begin{thm}
There is a 1-1 correspondence between the equivalence classes of dessins and the equivalence classes of Belyi pairs.
\end{thm}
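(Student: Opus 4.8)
The plan is to construct explicit maps in both directions between equivalence classes of Belyi pairs and equivalence classes of dessins, and then to show the two constructions are mutually inverse. The essential analytic input will be the Riemann existence theorem; everything else is combinatorial bookkeeping with monodromy.

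From a Belyi pair $(S_g, \beta)$ I would produce the dessin $\mathcal{D} = \beta^{-1}([0,1]) \subset S_g$, where $[0,1]$ is the real segment in $\mathbb{P}^1$ from $0$ to $1$; colour the points of $\beta^{-1}(0)$ black, the points of $\beta^{-1}(1)$ white, and take the connected components of $\beta^{-1}((0,1))$ as the edges. Because $\beta$ is unramified over $\mathbb{P}^1 \setminus \{0,1,\infty\}$ and $(0,1)$ is simply connected, each edge is an open arc joining one black vertex to one white vertex; at a point of ramification order $k$ over $0$ or $1$ exactly $k$ edges meet, and the anticlockwise orientation of $S_g$ arranges them in a cyclic order, which is exactly the data recorded by $\sigma_0$ and $\sigma_1$. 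I would then check the two defining properties of a dessin: $\mathcal{D}$ is connected because $S_g$ is (equivalently, the monodromy group $\langle \sigma_0, \sigma_1 \rangle$ acts transitively on edges, while each connected component of $\mathcal{D}$ is a union of edges stable under $\sigma_0$ and $\sigma_1$); and $S_g \setminus \mathcal{D} = \beta^{-1}(\mathbb{P}^1 \setminus [0,1])$ is a disjoint union of discs, since $\mathbb{P}^1 \setminus [0,1]$ is an open disc containing $\infty$ over which $\beta$ is a branched covering, each component being a $2m$-gon around a point of $\beta^{-1}(\infty)$ of ramification order $m$. An isomorphism of Belyi pairs transports the one dessin to an equivalent one, so this passage descends to equivalence classes.

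In the other direction I would start from the permutation representation pair $\langle \sigma_0, \sigma_1 \rangle \subset S_n$ of a dessin with $n$ edges, which acts transitively precisely because $\mathcal{D}$ is connected. Since $\pi_1(\mathbb{P}^1 \setminus \{0,1,\infty\})$ is free on two generators $\gamma_0, \gamma_1$ with $\gamma_0\gamma_1\gamma_\infty = \mathrm{id}$, the rule $\gamma_0 \mapsto \sigma_0$, $\gamma_1 \mapsto \sigma_1$ gives a transitive $n$-point action and hence a connected $n$-sheeted topological covering of $\mathbb{P}^1 \setminus \{0,1,\infty\}$. By the Riemann existence theorem this covering is realized, uniquely, by a holomorphic map, and filling in the punctures over $\{0,1,\infty\}$ yields a compact Riemann surface $S$ together with a ramified covering $\beta \colon S \to \mathbb{P}^1$ branched only over $\{0,1,\infty\}$, with local ramification orders equal to the cycle lengths of $\sigma_0$, $\sigma_1$ and $(\sigma_0\sigma_1)^{-1}$. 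Simultaneously conjugate pairs $(\sigma_0,\sigma_1)$ produce isomorphic Belyi pairs, so this passage also descends to equivalence classes.

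It remains to see that the two constructions are inverse to one another. Starting from a dessin, forming $(S,\beta)$, and then taking $\beta^{-1}([0,1])$ with its vertex rotations reproduces $(X,\mathcal{D})$ up to equivalence, because the covering was built from exactly the prescribed monodromy; starting from $(S_g,\beta)$, forming $\mathcal{D}$, and then reconstructing a covering gives a covering with the same monodromy representation as $\beta$ (the permutations $\sigma_0,\sigma_1$ read off from the vertex rotations of $\beta^{-1}([0,1])$ are the monodromies of $\beta$ around $0$ and $1$), hence a Belyi pair isomorphic to $(S_g,\beta)$. The cleanest way to organize this is to factor both maps through the intermediate set of transitive pairs $(\sigma_0,\sigma_1) \in S_n \times S_n$ modulo simultaneous conjugation. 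I expect the main obstacle to be the dessin-to-Belyi direction, where the Riemann existence theorem genuinely does the work of producing the complex structure on $S$, compactifying over $\{0,1,\infty\}$, and guaranteeing its uniqueness; the only other delicate point is pinning down the dictionary between the rotation system at a vertex and the local monodromy carefully enough, and with consistent orientation conventions, that the two round trips are literally the identity on equivalence classes rather than merely bijections of the correct cardinality.
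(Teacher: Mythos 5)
Your proposal is correct and follows essentially the route the paper itself indicates: the paper defers the proof to \cite{Kurk}, describing it as an argument via graph theory and the Riemann Existence Theorem, which is precisely the structure of your argument (dessin as $\beta^{-1}([0,1])$ in one direction, monodromy plus Riemann existence in the other, with transitive permutation pairs modulo conjugation as the intermediary). Nothing further is needed.
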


\begin{proof}
    It is proved using the concepts of graph theory and the Riemann Existence Theorem as in \cite{Kurk}.
\end{proof}

\begin{prop}
    The permutation representation pair of a dessin d'enfant and the monodromy of the corresponding Belyi pair are determined by each other.
\end{prop}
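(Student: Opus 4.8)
The plan is to go through the topological picture underlying the correspondence of Theorem (the $1$--$1$ correspondence between dessins and Belyi pairs). Given a Belyi pair $(S_g,\beta)$, the associated dessin is $\mathcal{D}=\beta^{-1}([0,1])\subset S_g$, with black vertices $\beta^{-1}(0)$, white vertices $\beta^{-1}(1)$, and edges the connected components of $\beta^{-1}\big((0,1)\big)$; by the local normal form $z\mapsto z^{k}$ of $\beta$ away from $\{0,1,\infty\}$-fibres, each edge is mapped homeomorphically onto the open segment $(0,1)$. I would fix once and for all a base point $\ast\in(0,1)\subset\mathbb{P}^{1}\setminus\{0,1,\infty\}$. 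Then every edge of $\mathcal{D}$ contains exactly one point of the fibre $F:=\beta^{-1}(\ast)$, and this produces a canonical bijection between $F$ and the edge set $\Omega=\{1,2,3,\dots\}$ of $\mathcal{D}$. Under this bijection the whole statement reduces to: the monodromy action of $\pi_{1}(\mathbb{P}^{1}\setminus\{0,1,\infty\},\ast)$ on $F$ is computed by $\sigma_{0},\sigma_{1},\sigma_{2}$.

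Next I would record the local analysis at a black vertex. Let $v\in\beta^{-1}(0)$ have valence $k$; in a suitable holomorphic chart centred at $v$ the map $\beta$ is $z\mapsto z^{k}$, so the $k$ edges incident to $v$ are the radial segments with $\arg z\in\{2\pi j/k:0\le j<k\}$. Lifting a small positively oriented loop $\gamma_{0}$ around $0$ based at $\ast$, starting on a given edge $e$ incident to $v$, one checks that the lift terminates on the edge obtained from $e$ by the anticlockwise rotation about $v$ --- which is exactly the definition of the cycle of $\sigma_{0}$ through $e$. Running this over all black vertices shows that the monodromy generator $g_{0}=\rho(\gamma_{0})$ equals $\sigma_{0}$. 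The identical argument at the white vertices (local form $z\mapsto 1+w^{\ell}$, loop $\gamma_{1}$ around $1$) gives $g_{1}=\sigma_{1}$. Since the standard generators satisfy $\gamma_{0}\gamma_{1}\gamma_{\infty}=\mathrm{id}$ in $\pi_{1}$, the monodromy relation $g_{0}g_{1}g_{\infty}=\mathrm{id}$ forces $g_{\infty}=(g_{0}g_{1})^{-1}=(\sigma_{0}\sigma_{1})^{-1}=\sigma_{2}$, in accordance with the fact that a cycle of length $u$ in $\sigma_{2}$ corresponds to a $2u$-gonal face; in particular the entire monodromy triple $(g_{0},g_{1},g_{\infty})$ is read off from $\langle\sigma_{0},\sigma_{1}\rangle$. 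Conversely, starting from $(S_g,\beta)$ and its monodromy triple with respect to $\ast$ and the chosen generators, the same bijection $F\simeq\Omega$ lets one set $\sigma_{0}:=g_{0}$, $\sigma_{1}:=g_{1}$, and the computation above identifies these with the anticlockwise edge-rotations about the black and white vertices of $\mathcal{D}=\beta^{-1}([0,1])$, i.e.\ with the permutation representation pair of the dessin. Hence the two data determine each other.

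I expect the only delicate point to be the bookkeeping of orientations: one must fix mutually compatible conventions for the orientation of $S_{g}$, the anticlockwise sense used to define $\sigma_{0},\sigma_{1}$, the positive orientation of the loops $\gamma_{0},\gamma_{1},\gamma_{\infty}$, and the direction of path-lifting, so that $g_{0}$ comes out equal to $\sigma_{0}$ and not to $\sigma_{0}^{-1}$; everything else is routine once this is pinned down. A secondary point is that both objects are only defined up to equivalence --- the monodromy up to simultaneous conjugation in $S_{n}$ (change of base point, or relabelling of $F$) and the dessin up to relabelling of $\Omega$ and orientation-preserving homeomorphism --- so the statement should be, and is, understood at the level of these equivalence classes; the bijection $F\simeq\Omega$ above is canonical for a fixed base point, and varying the base point along a path in $\mathbb{P}^{1}\setminus\{0,1,\infty\}$ changes it only by the corresponding simultaneous conjugation, which leaves the conclusion intact.
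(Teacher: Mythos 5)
Your argument is correct and is the standard one: identify the edges of $\mathcal{D}=\beta^{-1}([0,1])$ with the fibre $\beta^{-1}(\ast)$ over a base point $\ast\in(0,1)$, and check via the local normal form $z\mapsto z^{k}$ that the monodromy of the loops around $0$ and $1$ acts as the anticlockwise edge-rotations $\sigma_{0},\sigma_{1}$, with $g_{\infty}=(\sigma_0\sigma_1)^{-1}=\sigma_2$ forced by the relation in $\pi_1(\mathbb{P}^1\setminus\{0,1,\infty\})$. The paper gives no explicit proof of this proposition (it is implicit in the dessin--Belyi correspondence, which is deferred to the cited reference via the Riemann Existence Theorem), so there is nothing different to compare against; your write-up, including the care about orientation conventions and about both objects being defined only up to conjugation/relabelling, supplies exactly the argument the paper is relying on.
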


\begin{rem}Let the monodromy group of a Belyi covering be

$$
\left\langle g_{0}, g_{1}, g_{\infty} \mid g_{0}, g_{1}, g_{\infty}=i d\right\rangle\,.
$$

So the permutation representation pair for the corresponding dessin will be $\left\langle g_{0}, g_{1}\right\rangle$ and $g_{\infty}=\left(g_{0} g_{1}\right)^{-1}$. If the cycle structure of $g_{i}$ are $\lambda_{i}$, then the Belyi covering is determined by its \textit{ramification scheme} $\left[\lambda_{\infty}\right]\left[\lambda_{0}\right]\left[\lambda_{1}\right]$.
\end{rem}

\begin{thm}
    There is a one-to-one correspondence between the followings:
\begin{itemize}
    \item $\operatorname{dessins}\left(\mathcal{D}, S_{g}\right)$
    \item Belyi coverings of $\beta: S_{g} \rightarrow \mathbb{P}^{1}$ with degree $n$
    \item the solutions of the monodromy group relation $g_{0} g_{1} g_{\infty}=i d$, where $g_{i} \in S_{n}$
\end{itemize}
\end{thm}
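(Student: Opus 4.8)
The plan is to establish the triple equivalence by producing explicit bijections between consecutive pairs of the three sets and checking that they respect the equivalence relations already introduced. The statement is essentially a repackaging of Theorem~2.3.7 (dessins $\leftrightarrow$ Belyi pairs) together with Proposition~2.3.8 (permutation representation pair $\leftrightarrow$ monodromy), so the strategy is to reduce to those results rather than reprove the Riemann Existence Theorem from scratch. Concretely, I would organize the argument around the following composite of maps:
\[
\{(\mathcal{D},S_g)\} \;\longrightarrow\; \{\beta: S_g\to\mathbb{P}^1 \text{ of degree } n\} \;\longrightarrow\; \{(g_0,g_1,g_\infty)\in S_n^3 : g_0g_1g_\infty = \mathrm{id}\}/\!\sim
\]
and then show each arrow is a well-defined bijection.

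First I would treat the arrow from dessins to Belyi coverings: this is exactly the content of Theorem~2.3.7, once one observes that a Belyi pair $(S_g,\beta)$ with $\beta$ of degree $n$ is the same datum as a Belyi covering $\beta: S_g\to\mathbb{P}^1$ of degree $n$ in the sense of Definition~2.2.4, the degree being read off as the cardinality of a generic fibre, equivalently the number of edges of $\mathcal{D}$. So this step requires only bookkeeping: check that the equivalence of dessins (orientation-preserving homeomorphism inducing a coloured-graph isomorphism) matches the notion of equivalence of Belyi pairs used in Theorem~2.3.7, and that "$n$ edges" on the dessin side corresponds to "degree $n$" on the covering side. Next, for the arrow from Belyi coverings to solutions of $g_0g_1g_\infty=\mathrm{id}$, I would invoke the standard monodromy construction: pick a basepoint $* \in \mathbb{P}^1\setminus\{0,1,\infty\}$, identify $\beta^{-1}(*)$ with $\{1,\dots,n\}$, and let $g_0,g_1,g_\infty$ be the images under the monodromy homomorphism $\pi_1(\mathbb{P}^1\setminus\{0,1,\infty\},*)\to S_n$ of the standard loops $\gamma_0,\gamma_1,\gamma_\infty$ around the three punctures; since $\gamma_0\gamma_1\gamma_\infty = \mathrm{id}$ in $\pi_1$ (by the presentation recalled in Section~2.1, with $k=3$), the relation $g_0g_1g_\infty=\mathrm{id}$ is automatic. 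Connectedness of $S_g$ forces the subgroup $\langle g_0,g_1,g_\infty\rangle$ to act transitively; different choices of identification of $\beta^{-1}(*)$ with $\{1,\dots,n\}$ conjugate the triple simultaneously, which is precisely the equivalence relation $\sim$ on triples, so the map descends to equivalence classes. Surjectivity and injectivity of this second arrow are the Riemann Existence Theorem: given a transitive triple, one builds a connected covering of $\mathbb{P}^1\setminus\{0,1,\infty\}$, fills in the punctures to get a compact Riemann surface $S_g$ and an extension of $\beta$ ramified only over $\{0,1,\infty\}$, and this is unique up to the appropriate equivalence.

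Finally I would tie the loop together by matching the monodromy triple $(g_0,g_1,g_\infty)$ of $\beta$ with the permutation representation pair $\langle\sigma_0,\sigma_1\rangle$ of the dessin $\mathcal{D}$ attached to $\beta$: by Proposition~2.3.8 these determine each other, and under the standard normalization one has $\sigma_0 = g_0$, $\sigma_1 = g_1$, hence $\sigma_2 = (\sigma_0\sigma_1)^{-1} = g_\infty$, so the cycle types $\lambda_i$ of the $g_i$ are exactly the ramification indices over $0,1,\infty$ and the ramification scheme $[\lambda_\infty][\lambda_0][\lambda_1]$ is well defined on equivalence classes (as noted in the Remark preceding the statement). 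Composing the three identifications yields the desired one-to-one correspondence among all three sets. The main obstacle, and the part that genuinely needs the Riemann Existence Theorem rather than formal manipulation, is the bijectivity of the second arrow — specifically, showing that every transitive solution of $g_0g_1g_\infty=\mathrm{id}$ actually arises from a (unique, up to equivalence) compact Riemann surface with a meromorphic Belyi map; the topological covering-space part is routine, but analytically upgrading the branched topological cover to a holomorphic map between Riemann surfaces, and verifying genus and ramification data are preserved, is where the real work lies. Since the excerpt already grants Theorem~2.3.7 and Proposition~2.3.8, however, I would simply cite those for this step and present the present theorem as their immediate consequence, with the bookkeeping of degrees, connectedness/transitivity, and equivalence relations filled in as above.
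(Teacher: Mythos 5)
Your proposal is correct and follows the route the paper itself implies: the paper states this theorem without proof as a summary of the preceding Theorem on the dessin--Belyi pair correspondence and the Proposition matching permutation representation pairs with monodromy, which is exactly the reduction you carry out. Your added care about quotienting the triples by simultaneous conjugacy and requiring transitivity (equivalently, connectedness of $S_g$) is appropriate, since the paper's statement leaves both implicit.
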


\vskip5pt

\subsection{The action of the absolute Galois group 
\texorpdfstring{$\text{Gal}(\bar{\mathbb{Q}}/\mathbb{Q})$}{}}

\begin{dfn}
    The universal Galois group, or the absolute Galois group is the group of automorphisms of algebraic numbers $\overline{\mathbb{Q}}$ and denoted by $\Gamma=$ $\operatorname{Gal}(\overline{\mathbb{Q}} / \mathbb{Q}) \cdot \mathbb{Q}$ is the fixed field of $\Gamma$.
\end{dfn}

Let $k \subset \overline{\mathbb{Q}}$ be a number field. Every automorphism $k$ may be extended to an automorphism of $\overline{\mathbb{Q}}$.  Subgroups of $\Gamma$ of finite index are in one-to-one correspondence with finite extensions of $\mathbb{Q}$ inside $\overline{\mathbb{Q}}$.

\begin{rem}
    All orbits of the action of $\Gamma$ on dessins are finite.
\end{rem}

\begin{thm}
    Let $\mathcal{D}$ be a dessin. The following properties of $\mathcal{D}$ remain invariant under the action of $\Gamma$:
\begin{enumerate}
    \item the number of edges
    \item the number of white vertices, black vertices and faces
    \item the degree of the white vertices, black vertices and faces
    \item the genus
    \item the monodromy group
    \item the automorphism group
\end{enumerate}
\end{thm}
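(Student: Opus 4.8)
The plan is to argue everything through the dictionary furnished by the preceding theorems: a dessin $\mathcal{D}$ corresponds to a Belyi pair $(S,\beta)$, where $S$ is a smooth projective curve over $\overline{\mathbb{Q}}$ and $\beta\colon S\to\mathbb{P}^{1}$ is a morphism over $\overline{\mathbb{Q}}$ unramified outside $\{0,1,\infty\}$, and an element $\sigma\in\Gamma$ acts by applying $\sigma$ to the coefficients of equations defining $S$ and $\beta$, producing the conjugate pair $(S^{\sigma},\beta^{\sigma})$, whose dessin is $\mathcal{D}^{\sigma}$. The first step is to make this operation functorial: $\sigma$ induces an isomorphism of schemes $S\xrightarrow{\ \sim\ }S^{\sigma}$ lying over the isomorphism $\mathbb{P}^{1}\xrightarrow{\ \sim\ }\mathbb{P}^{1}$ that is the identity on the coordinate, because $\sigma$ fixes $0,1,\infty\in\mathbb{Q}$ and hence preserves the branch locus on the nose. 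So it suffices to check that each quantity in the list is extracted from $(S,\beta)$ in a way that is insensitive to such a conjugation.

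For (1)--(3): the number of edges of $\mathcal{D}$ is $\deg\beta$, and $\deg\beta^{\sigma}=\deg\beta$ since applying a field automorphism to a nonconstant rational map preserves its degree; the black vertices, white vertices, and faces are the points of the fibres $\beta^{-1}(0),\beta^{-1}(1),\beta^{-1}(\infty)$, with vertex degree (resp. half the face degree) equal to the local ramification index. Since $\sigma$ carries the finite $\overline{\mathbb{Q}}$-scheme $\beta^{-1}(c)$ isomorphically onto $(\beta^{\sigma})^{-1}(c)$ for $c\in\{0,1,\infty\}$, and an isomorphism of schemes preserves the number of closed points and their multiplicities, the counts in (2) and the multisets of ramification indices in (3) are unchanged; in permutation language, the cycle types $\lambda_{0},\lambda_{1},\lambda_{\infty}$ of $g_{0},g_{1},g_{\infty}$ are Galois-invariant. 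For (4), given (1) and (3) the Riemann--Hurwitz relation
\[ 2-2g \;=\; 2\,\deg\beta \;-\!\!\sum_{c\in\{0,1,\infty\}}\ \sum_{p\in\beta^{-1}(c)}(e_{p}-1) \]
determines $g$ from data already shown invariant, so the genus is invariant (equivalently, $g=\dim_{\overline{\mathbb{Q}}}H^{1}(S,\mathcal{O}_{S})$ is unchanged by base change along the field automorphism $\sigma$).

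For (5)--(6): for the monodromy group, pass to the Galois closure $\widehat{\beta}\colon\widehat{S}\to\mathbb{P}^{1}$, i.e.\ the normalization of $\mathbb{P}^{1}$ in the Galois closure of the extension $\overline{\mathbb{Q}}(S)/\overline{\mathbb{Q}}(t)$ induced by $\beta$; its deck group, acting on a fibre, is the monodromy group $M\le S_{n}$ up to conjugacy. All of this is defined over $\overline{\mathbb{Q}}$, and $\sigma$ carries $\widehat{\beta}$ to a Galois cover dominating $\beta^{\sigma}$ of minimal degree, hence to the Galois closure of $\beta^{\sigma}$, while $\phi\mapsto\phi^{\sigma}$ is a group isomorphism $\mathrm{Aut}(\widehat{S}/\mathbb{P}^{1})\xrightarrow{\ \sim\ }\mathrm{Aut}(\widehat{S}^{\sigma}/\mathbb{P}^{1})$ compatible with the actions on fibres; so the monodromy group of $\mathcal{D}^{\sigma}$ is conjugate in $S_{n}$ to that of $\mathcal{D}$. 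Finally, the automorphism group of the dessin is $\{\phi\colon S\to S\ :\ \beta\circ\phi=\beta\}$ (equivalently the centralizer $C_{S_{n}}(M)$), and since $\beta^{\sigma}\circ\phi^{\sigma}=(\beta\circ\phi)^{\sigma}=\beta^{\sigma}$, the map $\phi\mapsto\phi^{\sigma}$ is an isomorphism onto the automorphism group of $\mathcal{D}^{\sigma}$, which gives (6); it also follows from (5) via the centralizer description.

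The one place with genuine content beyond bookkeeping is (5): the cycle types in (3) do \emph{not} determine the monodromy group, so a purely combinatorial argument is unavailable, and the care must go into justifying that forming the Galois closure commutes with the conjugation $\sigma$ and that $\sigma$ transports the deck-transformation action on a fibre to the deck-transformation action on the conjugate fibre. Once that functoriality is set up cleanly, (6) is immediate and (1)--(4) are routine.
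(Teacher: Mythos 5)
The paper states this theorem without any proof---it is quoted as a standard fact from the dessins literature (cf.\ \cite{LanZvo}, \cite{GirGon})---so there is no argument of the paper's to measure yours against; I can only assess the proposal on its own terms, and it is correct and essentially complete. You rightly reduce everything to the functoriality of the conjugation $(S,\beta)\mapsto(S^{\sigma},\beta^{\sigma})$: since $\sigma$ fixes $0,1,\infty$, it carries each fibre $\beta^{-1}(c)$ onto $(\beta^{\sigma})^{-1}(c)$ preserving the number of points and the local ramification indices, which gives (1)--(3) at once and then (4) by Riemann--Hurwitz (or by $V-E+F$); and you correctly isolate (5) as the only item with real content, handling it by the standard device of the Galois closure of $\overline{\mathbb{Q}}(S)/\overline{\mathbb{Q}}(t)$, with (6) falling out of (5) via the paper's definition of $\operatorname{Aut}\beta$ as the centralizer of the monodromy group in $S_{n}$. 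Two small points to tighten if you write this out in full. First, the map $S\to S^{\sigma}$ is an isomorphism of abstract schemes but only $\sigma$-semilinear over $\overline{\mathbb{Q}}$; this is harmless for every use you make of it (counting closed points, ramification indices, $\dim H^{1}(S,\mathcal{O}_{S})$), but it should be said, since it is the reason the conjugate curve need not be isomorphic to $S$ \emph{as a curve over} $\overline{\mathbb{Q}}$ even though all the listed invariants agree. Second, the deck group of $\widehat{S}/\mathbb{P}^{1}$ acts \emph{regularly} on a fibre of $\widehat{\beta}$, so to recover the monodromy group as a subgroup of $S_{n}$ (rather than merely as an abstract group) you must take its coset action on the fibre of $\beta$, equivalently on the $n$ embeddings of $\overline{\mathbb{Q}}(S)$ into the Galois closure; $\sigma$ transports the whole tower $\widehat{S}\to S\to\mathbb{P}^{1}$ and hence this coset action, which is what yields conjugacy in $S_{n}$ and not just an isomorphism of groups.
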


We conclude this section with the following theorem describing another facet of the action of $\Gamma$ :

\begin{thm}
    The restriction of the action of $\Gamma$ to dessins of genus $g$ is faithful for every $g$.
\end{thm}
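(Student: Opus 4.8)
The plan is to prove, for each fixed $g\geq 0$, that any $\sigma\in\Gamma=\operatorname{Gal}(\overline{\mathbb{Q}}/\mathbb{Q})$ acting trivially on all equivalence classes of genus-$g$ dessins must be the identity; equivalently, given $\sigma\neq 1$ I will produce a genus-$g$ dessin $\mathcal{D}$ with $\mathcal{D}^\sigma\not\simeq\mathcal{D}$. Via the correspondence between dessins and Belyi pairs recalled above, a genus-$g$ dessin is a Belyi pair $(S_g,\beta)$, its Galois translate is $(S_g^\sigma,\beta^\sigma)$, and $\mathcal{D}^\sigma\simeq\mathcal{D}$ means exactly that there is an isomorphism $\phi\colon S_g^\sigma\xrightarrow{\sim}S_g$ with $\beta^\sigma=\beta\circ\phi$. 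So in each genus the task reduces to exhibiting a Belyi pair rigid enough that this identity fails once $\sigma$ disturbs its defining coefficients.

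\textbf{Genus $0$.} This is the heart of the argument, and it must be done concretely, because every genus-$0$ dessin sits on the same curve $\mathbb{P}^1$ and there is no moduli of the source to exploit. I would work with Shabat polynomials, i.e.\ with bicoloured plane trees --- the one-faced genus-$0$ dessins already met for polynomial coverings. Given $\sigma\neq 1$, choose an algebraic number $\alpha$ with $\sigma(\alpha)\neq\alpha$, and look for a polynomial $P$ whose derivative is $P'(z)=c\prod_i(z-\alpha_i)^{m_i}$ with $\alpha$ among the critical points $\alpha_i$ and with the exponents and the other $\alpha_i$ constrained by $\mathbb{Q}$-rational equations so that $P$ takes only two finite critical values; such a $P$ is a Shabat polynomial, hence after an affine normalization of source and target a Belyi function, and the normalization is canonical (it pins the extremal preimages of the two critical values). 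Since $\sigma$ acts on $P$ simply through its coefficients and fixes the defining $\mathbb{Q}$-rational constraints, $P^\sigma$ is again such a polynomial, but now with critical point $\sigma(\alpha)\neq\alpha$; hence $P^\sigma$ is not $P$ composed with an automorphism of $\mathbb{P}^1$, and the two plane trees are inequivalent. A dimension count shows that the available ramification data realize every algebraic number as a critical point of some Shabat polynomial, so every $\sigma\neq1$ is caught; this is the classical faithfulness of $\Gamma$ on plane trees, which one may equally well just invoke.

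\textbf{Passing to genus $g\geq 1$.} Here moduli of the source become available and the argument is softer. The group $\Gamma$ acts faithfully on $M_g(\overline{\mathbb{Q}})$: for $g=1$ through the $j$-invariant (so on $\overline{\mathbb{Q}}$ itself), and for $g\geq 2$ through explicit families such as the hyperelliptic curves $y^2=\prod_i(x-a_i)$, whose moduli point varies faithfully with the branch set. Given $\sigma\neq1$, pick a curve $C/\overline{\mathbb{Q}}$ of genus $g$ with $C^\sigma\not\cong C$; by Belyi's Theorem $C$ admits a Belyi function, hence underlies a genus-$g$ dessin $\mathcal{D}$, and since the underlying curve of $\mathcal{D}^\sigma$ is $C^\sigma\not\cong C$ we get $\mathcal{D}^\sigma\not\simeq\mathcal{D}$. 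Alternatively one can stay combinatorial: insert a fixed genus-$g$ ``gadget'' (a dessin drawn on a surface of genus $g$ with one boundary circle) into a canonically chosen face of a genus-$0$ dessin detected by $\sigma$; this operation is $\Gamma$-equivariant, raises the genus by exactly $g$, and is injective on equivalence classes.

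\textbf{Main obstacle.} The real difficulty is entirely in the genus-$0$ case. Because all genus-$0$ Belyi pairs live on $\mathbb{P}^1$, one cannot detect $\sigma$ by looking at the source curve, and one must instead build a Belyi function whose $\mathrm{PGL}_2$-orbit is small enough to carry no accidental coincidences while still encoding an arbitrary algebraic number. The two technical points are: choosing the ramification data and the auxiliary $\mathbb{Q}$-rational constraints so that the resulting Shabat polynomial genuinely remembers the chosen critical point $\alpha$ (rather than only its full set of conjugates), and verifying that the affine normalization is canonical, so that no change of variable can identify $P$ with $P^\sigma$. Once genus $0$ is settled, the step to arbitrary $g$ by either route above is routine.
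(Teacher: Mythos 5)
The paper itself gives no proof of this statement: it is quoted as a known deep theorem (faithfulness on plane trees is due to Lenstra and Schneps, and the extension to every genus to Schneps and to Girondo--Gonz\'alez-Diez), so your proposal has to stand on its own. Your reduction for $g\geq 1$ is essentially correct and complete in outline: every $\sigma\neq 1$ moves some $j$-invariant, hence some elliptic curve over $\overline{\mathbb{Q}}$, and for $g\geq 2$ some hyperelliptic curve $y^{2}=\prod_i(x-a_i)$ whose branch set is chosen so that its $\mathrm{PGL}_2(\overline{\mathbb{Q}})$-orbit detects $\sigma$; Belyi's theorem then equips such a curve with a genus-$g$ dessin, and non-isomorphism of the underlying curves forces inequivalence of the dessins.

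The genuine gap is exactly where you locate it, in genus $0$, and it is not filled. Two specific steps do not follow as written. First, the inference ``$P^{\sigma}$ has critical point $\sigma(\alpha)\neq\alpha$, hence $P^{\sigma}$ is not $P$ composed with an automorphism of $\mathbb{P}^{1}$'' is invalid on its own: $P$ might have $\sigma(\alpha)$ among its other critical points with the same local degree, or an affine change of variable could carry the critical divisor of $P^{\sigma}$ onto that of $P$. Ruling this out is precisely what the careful choice of pairwise distinct exponents $m_i$ (and of the auxiliary critical points and normalizations) in the Lenstra--Schneps lemma accomplishes, and you never make that choice. Second, the ``dimension count'' showing that every algebraic number can be realized as a distinguished critical point of a Shabat polynomial is asserted rather than performed: the requirement that $P'=c\prod_i(z-\alpha_i)^{m_i}$ integrate to a polynomial with only two finite critical values is a nontrivial system of algebraic conditions on the $\alpha_i$, and one must exhibit a solution with the prescribed $\alpha$ among the $\alpha_i$, defined by $\mathbb{Q}$-rational constraints so that $\sigma$ carries solutions to solutions. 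Closing by saying one ``may equally well just invoke'' faithfulness on plane trees concedes that the heart of the theorem is assumed, not proved; likewise the ``gadget insertion'' alternative is incomplete because a face of a dessin is not canonically chosen in general (automorphisms and the Galois action can permute faces).
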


\vskip5pt

\subsection{Examples}

\begin{example}
   In the figures below the Belyi functions and ramification schemes are given with the corresponding dessins for star-trees and for Chebyshev polynomials. 
\begin{figure}[htp]
\includegraphics[width=120mm]{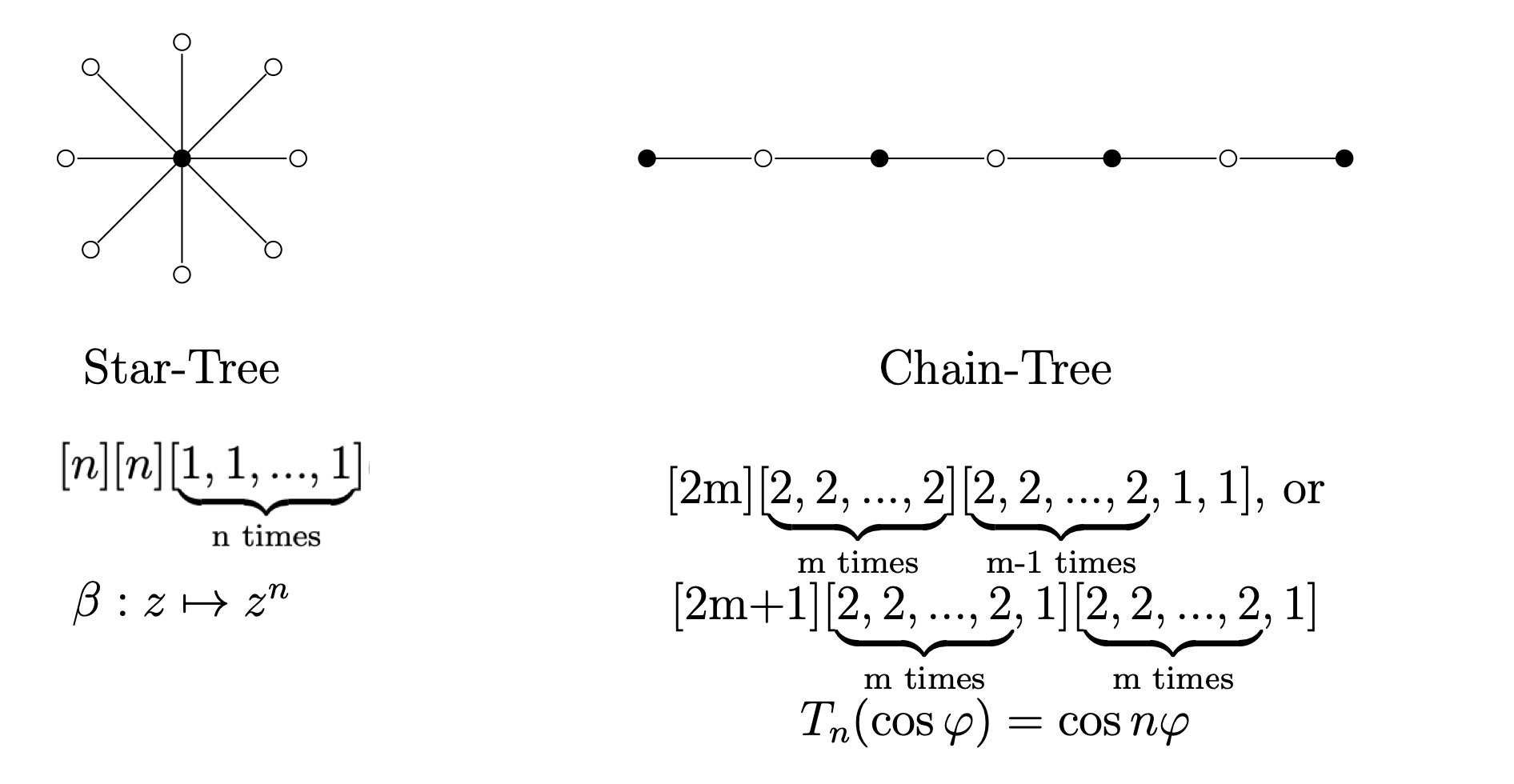}
\end{figure}
\end{example}

\begin{example}
There are ``conjugate" trees with a common ramification scheme: The following trees have the ramification scheme $[7][3,2,2][2,2,1,1,1]$ with the corresponding polynomials $P(x)=x^{3}\left(x^{2}-2 x \pm a\right)^{2}$, where $a=\frac{1}{7}(34 \pm 6 \sqrt{21})$ and they are both defined over the field $\mathbb{Q}(\sqrt{21})$.
\begin{figure}[htp]
\includegraphics[width=120mm]{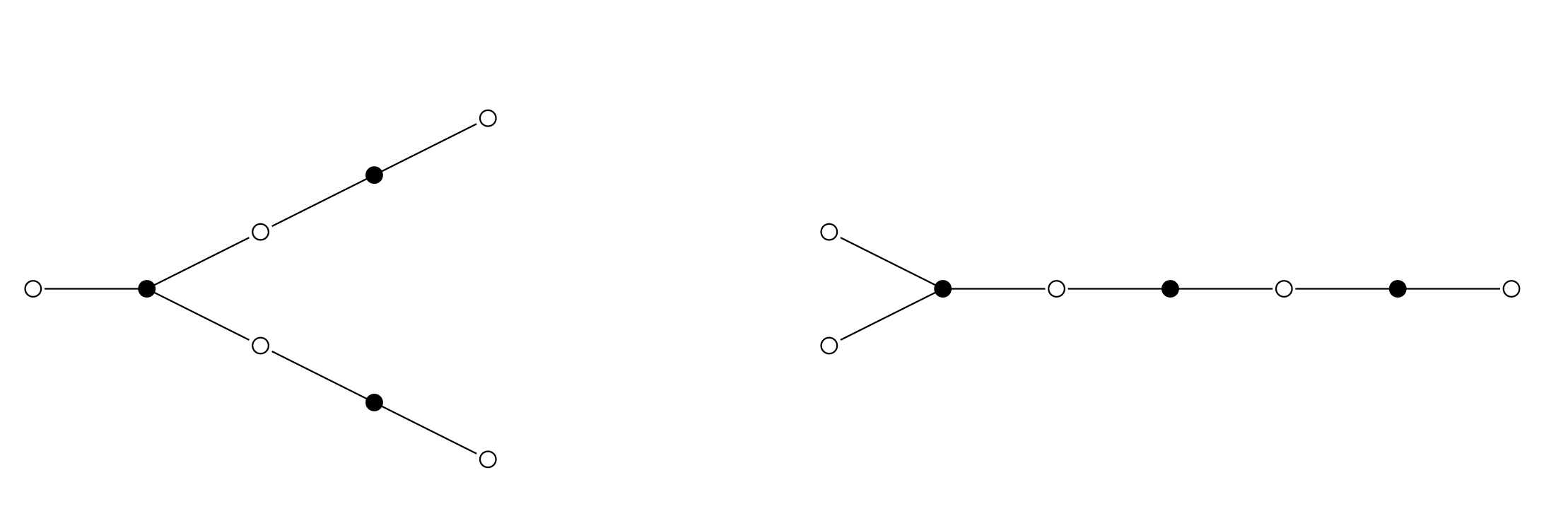}
\end{figure}
\end{example}

\begin{example}
    The following three dessins are defined over cubic fields, permutable by $\Gamma=\operatorname{Gal}(\overline{\mathbb{Q}} / \mathbb{Q})$. They lie in the decomposition of the polynomial

$$
25 x^{3}-12 x^{2}-24 x-16=25(x-a)\left(x-a_{+}\right)\left(x-a_{-}\right) \text {. }
$$

We agree that $a \in \mathbb{R}, \operatorname{Im}\left(a_{+}\right)>0, \operatorname{Im}\left(a_{-}\right)<0$.
\begin{figure}[htp]
\includegraphics[width=160mm]{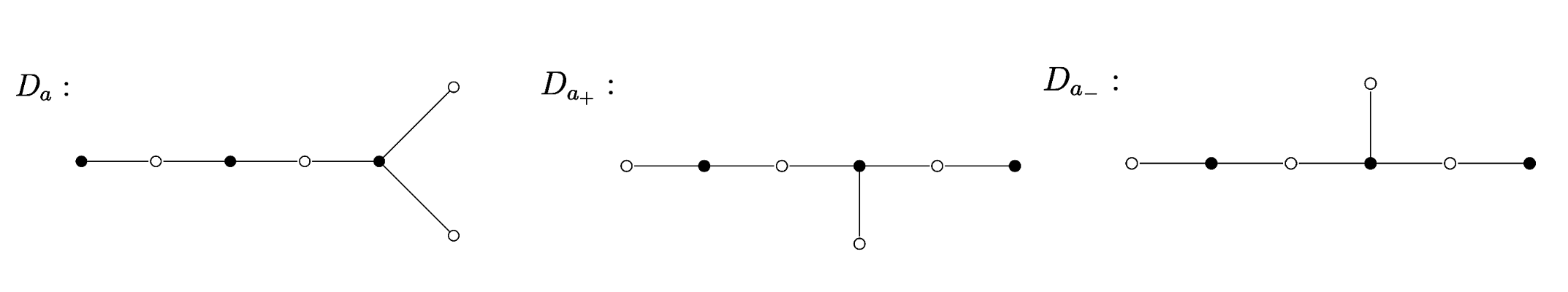}
\end{figure}
\hspace{0.1cm}

\end{example}

\begin{example}Now we will give an example of a rational Belyi covering which is not a tree.
  Let the dessin be as follows

\begin{figure}[htp]
\includegraphics[width=90mm]{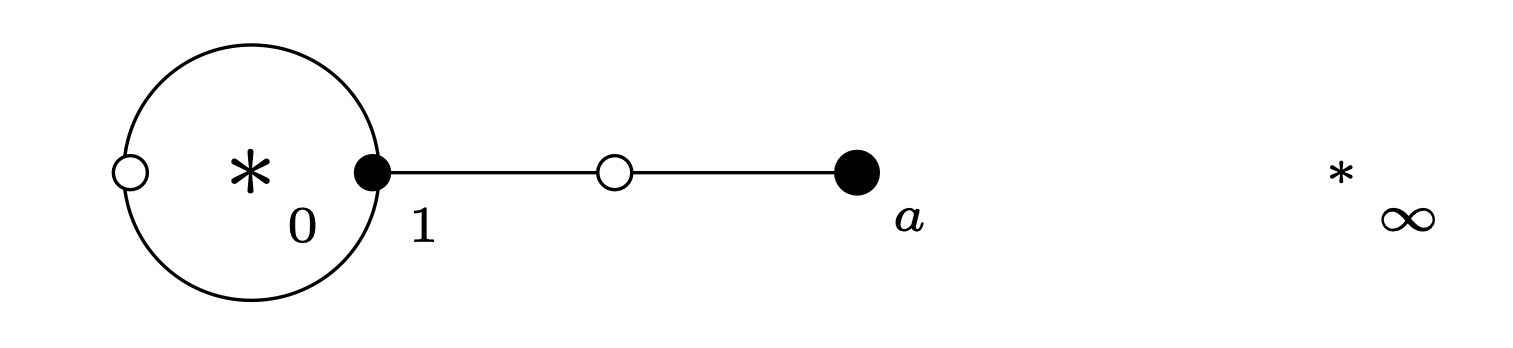}
\end{figure}

  One of the poles is at the center of one of the faces and the other pole is at $\infty$. We denote the roots of the corresponding function with 1 and a. Now the Belyi covering will be of the form

$$
\beta(x)=K \frac{(x-1)^{3}(x-a)}{x}
$$

with $K=-\frac{1}{64}$ and $a=9$. The roots of $\beta(z)-1$ corresponding to white vertices are $3 \pm 2 \sqrt{3}$.
\end{example}

\vskip5pt

\section{Counting coverings with a given ramification scheme}

\subsection{Tutte formula for counting polynomial coverings}

\begin{dfn}
     Let $\beta$ be a Belyi covering of $\mathbb{P}^{1}$. The centralizer in $S_{n}$ of the monodromy group of $\beta$ is called the \textit{automorphism group} of $\beta$ and denoted by Aut $\beta$.
\end{dfn}

\begin{dfn}
    Let $S$ be a compact Riemann surface. $\sum_{\beta: S \rightarrow \mathbb{P}^{1}} \frac{1}{\text { Aut } \beta}$ is called the \textit{Eisenstein number} of coverings of $\mathbb{P}^{1}$.
\end{dfn}

Tutte found the Eisenstein number of planar trees with $n$ edges and given degrees $d_{i}^{\bullet}$ and $d_{j}^{\circ}$ of black and white vertices. Clearly, $\sum_{i} d_{i}^{\bullet}=\sum_{j} d_{j}^{\circ}=n$, the number of edges of $T$, or what is the same, the degree of the respective covering. The degrees $d^{\bullet}=\left(d_{1}^{\bullet}, d_{2}^{\bullet}, \ldots\right)$ and $d^{\circ}=\left(d_{1}^{\circ}, d_{2}^{\circ}, \ldots\right)$ give two partitions of $n$. In practice it is more convenient to deal with partitions $q^{\bullet}=\left(q_{1}^{\bullet}, q_{2}^{\bullet}, \ldots\right)$ and $q^{\circ}=\left(q_{1}^{\circ}, q_{2}^{\circ}, \ldots\right)$ where $q_{i}^{\bullet}$ and $q_{i}^{\circ}$ is the number of black and white vertices of degree $i$. Observe that

\begin{equation*}
n=\sum_{i} i q_{i}^{\bullet}=\sum_{j} j q_{j}^{\circ}=\sum_{i} q_{i}^{\bullet}+\sum_{j} q_{j}^{\circ}-1 
\end{equation*}

We'll often use the last two sums and introduce for them special notations

\begin{equation*}
\sigma^{\bullet}=\sum_{i} q_{i}^{\bullet}, \quad \sigma^{\circ}=\sum_{j} q_{j}^{\circ} . 
\end{equation*}

In this notations the (slightly modified) Tutte result may be stated as follows:

\begin{thm}{\bf(Tutte formula)}
    $$
\sum_{T} \frac{1}{|\operatorname{Aut} T|}=\frac{1}{\sigma^{\bullet} \sigma^{\circ}}\left(\begin{array}{l}
\sigma^{\bullet}  \\
q^{\bullet}
\end{array}\right)\left(\begin{array}{l}
\sigma^{\circ} \\
q^{\circ}
\end{array}\right)
$$

where the sum is extended over all planar trees $T$ with given degrees $d^{\bullet}, d^{\circ}$ of black and white vertices. Parentheses in right hand side $\left(\begin{array}{c}\sigma^{\bullet} \\ q^{\bullet}\end{array}\right)$ stand for multinomial coefficient $\left(\begin{array}{c}q_{1}^{\bullet}+q_{2}^{\bullet}+\ldots+q_{k}^{\bullet} \\ q_{1}^{\bullet}, q_{2}^{\bullet}, \ldots, q_{k}^{\bullet}\end{array}\right)$.

\end{thm}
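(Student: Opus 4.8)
The plan is to strip away all automorphisms by passing to \emph{fully vertex-labelled} plane trees, count those directly, and divide back out. Throughout, a tree means a bicoloured plane (ribbon) tree with black-vertex degree multiset $d^\bullet$ and white-vertex degree multiset $d^\circ$, $|\operatorname{Aut}T|$ is the order of its automorphism group, and $n=\sigma^\bullet+\sigma^\circ-1$. The first step is rigidification. A colour- and orientation-preserving automorphism $\varphi$ of such a $T$ fixing every vertex is the identity: for a vertex $v$ with neighbour $u$, $\varphi$ fixes the edge $\{u,v\}$ and hence its half-edge at $v$, and a cyclic-order-preserving permutation of the half-edges at $v$ that fixes one of them fixes all of them. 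Therefore $\operatorname{Aut}(T)$ acts \emph{freely} on the $\sigma^\bullet!\,\sigma^\circ!$ ways of labelling the black vertices bijectively by $\{1,\ldots,\sigma^\bullet\}$ and the white by $\{1,\ldots,\sigma^\circ\}$. Summing over isomorphism classes and then recording which label carries which degree --- which contributes the multinomial factors $\binom{\sigma^\bullet}{q^\bullet}$ and $\binom{\sigma^\circ}{q^\circ}$ --- we obtain
\[
\sigma^\bullet!\,\sigma^\circ!\sum_T\frac{1}{|\operatorname{Aut}T|}
=\binom{\sigma^\bullet}{q^\bullet}\binom{\sigma^\circ}{q^\circ}\,N(\beta;\gamma),
\]
where $N(\beta;\gamma)$ denotes the number of bicoloured plane trees on a \emph{fixed} labelled vertex set with prescribed degrees $\deg b_i=\beta_i$ and $\deg w_j=\gamma_j$ (here $\beta$, $\gamma$ are any fixed orderings of $d^\bullet$, $d^\circ$). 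Everything thus reduces to showing $N(\beta;\gamma)=(\sigma^\bullet-1)!\,(\sigma^\circ-1)!$, \emph{independently of the degrees}.

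For that I would separate the plane structure from the underlying abstract tree. On a labelled vertex set, a plane tree is exactly an abstract tree together with a cyclic order of the incident edges at each vertex, and since a tree has $E=V-1$ Euler's formula forces genus $0$ for \emph{every} rotation system, so all such choices are admissible; hence
\[
N(\beta;\gamma)=N_{\mathrm{abs}}(\beta;\gamma)\cdot\prod_i(\beta_i-1)!\prod_j(\gamma_j-1)!,
\]
where $N_{\mathrm{abs}}$ counts abstract labelled bicoloured trees with those degrees. The latter is the bipartite form of Cayley's formula (by a two-colour Prüfer code, or the Matrix--Tree theorem):
\[
N_{\mathrm{abs}}(\beta;\gamma)=\frac{(\sigma^\circ-1)!}{\prod_i(\beta_i-1)!}\cdot\frac{(\sigma^\bullet-1)!}{\prod_j(\gamma_j-1)!},
\]
which has consistent multinomial shape because $\sum_i(\beta_i-1)=\sigma^\circ-1$ and $\sum_j(\gamma_j-1)=\sigma^\bullet-1$. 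Multiplying the last two displays, the factors $\prod_i(\beta_i-1)!$ and $\prod_j(\gamma_j-1)!$ cancel and $N(\beta;\gamma)=(\sigma^\bullet-1)!\,(\sigma^\circ-1)!$, as required.

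Substituting into the first display and dividing by $\sigma^\bullet!\,\sigma^\circ!$ yields $\sum_T|\operatorname{Aut}T|^{-1}=\frac{(\sigma^\bullet-1)!\,(\sigma^\circ-1)!}{\sigma^\bullet!\,\sigma^\circ!}\binom{\sigma^\bullet}{q^\bullet}\binom{\sigma^\circ}{q^\circ}=\frac{1}{\sigma^\bullet\sigma^\circ}\binom{\sigma^\bullet}{q^\bullet}\binom{\sigma^\circ}{q^\circ}$, which is the claim. The main obstacle is the middle step --- the independence of $N(\beta;\gamma)$ from the degrees --- which hinges on the prescribed-degree bipartite tree count; in its ``planted tree'' guise this independence is precisely Tutte's theorem, so one could instead invoke \cite{Tut} directly and pass straight to the final cancellation. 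An alternative route would use the monodromy correspondence to rewrite $\sum_T|\operatorname{Aut}T|^{-1}$ as the number of factorizations $g_0g_1=c$ of a fixed $n$-cycle $c$ with $g_0,g_1$ of cycle types $d^\bullet,d^\circ$, and evaluate that by the Frobenius character formula; but the symmetric-group computation is heavier than the bookkeeping above.
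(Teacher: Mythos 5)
Your argument is correct, and it is worth noting that the paper itself offers no proof of this statement at all --- it simply records the (slightly modified) result of Tutte with a citation --- so your write-up supplies a complete, self-contained derivation where the paper has none. The three pillars of your proof all hold up: (i) the rigidity lemma (an orientation- and colour-preserving automorphism of a plane tree fixing every vertex is the identity) is exactly what guarantees that the stabilizer of a vertex-labelling inside $\operatorname{Aut}(T)$ is trivial, so each isomorphism class contributes $\sigma^{\bullet}!\,\sigma^{\circ}!/|\operatorname{Aut}T|$ labelled objects; (ii) the observation that for a tree $E=V-1$ forces $F=1-2g\geq 1$, hence $g=0$, so \emph{every} rotation system is planar and the plane structures over a fixed abstract labelled tree are counted by $\prod_v(\deg v-1)!$; and (iii) the bipartite Pr\"ufer/Cayley count $N_{\mathrm{abs}}(\beta;\gamma)=\binom{\sigma^{\circ}-1}{\beta_1-1,\ldots}\binom{\sigma^{\bullet}-1}{\gamma_1-1,\ldots}$ is the standard prescribed-degree spanning-tree count for $K_{\sigma^{\bullet},\sigma^{\circ}}$ (consistent with $\sum_i(\beta_i-1)=\sigma^{\circ}-1$). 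The cancellation $N(\beta;\gamma)=(\sigma^{\bullet}-1)!\,(\sigma^{\circ}-1)!$ and the final division by $\sigma^{\bullet}!\,\sigma^{\circ}!$ then give exactly the stated formula; spot-checks on the $n$-star ($1/n$ versus the cyclic automorphism group of order $n$) and on short paths confirm the normalization. Your closing remark is also apt: the character-theoretic route via factorizations of a full cycle recovers the same number from the specialization of formula (1) of Section 3 to the class of an $n$-cycle, but at the cost of a genuinely harder symmetric-group computation, so the labelled-tree argument you give is the more economical one.
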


\subsection{Burnside Theorem}

Topologically, coverings $\pi: X \rightarrow Y$ of degree $n$ unramified outside $k$ points $y_{i} \in$ $Y$ are classified by conjugacy classes of homomorphisms $\pi_{1} \rightarrow S_{n}$ of the fundamental group $\pi_{1}=\pi_{1}\left(Y \backslash\left\{y_{1}, y_{2}, \ldots, y_{k}\right\}\right)$, which is known to be defined by the unique relation

$$
c_{1} c_{2} \ldots c_{k}\left[f_{1}, h_{1}\right]\left[f_{2}, h_{2}\right] \ldots\left[f_{g}, h_{g}\right]=1, \quad f_{i}, h_{i} \in S_{n}, c_{i} \in C_{i}
$$

where $g$ is the genus of $Y$ and the brackets denote the commutator $[f, h]=f h f^{-1} h^{-1}$. Thus the coverings of Riemann sphere $\pi: X \rightarrow \mathbb{P}^{1}$ of given degree $n$ and ramification indices are parametrized by solutions of the equation

\begin{equation*}
c_{1} c_{2} \ldots c_{k}=1, \quad c_{i} \in C_{i} 
\end{equation*}

up to conjugacy, where cycle lengths of the conjugacy class $C_{i} \subset S_{n}$ are equal to ramification indices of points in fibers $\pi^{-1}\left(y_{i}\right)$.

The following theorem gives the number of solutions of the equation above for an arbitrary group $G$ in terms of irreducible characters:

\begin{thm} {\bf (Burnside)}
\begin{equation*}
\#\left\{c_{1} c_{2} \ldots c_{k}=1 \mid c_{i} \in C_{i}\right\}=\frac{\left|C_{1}\right|\left|C_{2}\right| \ldots\left|C_{k}\right|}{|G|} \sum_{\chi} \frac{\chi\left(c_{1}\right) \chi\left(c_{2}\right) \ldots \chi\left(c_{k}\right)}{\left(\chi(1)^{k-2}\right)} \tag{1}\label{Burnside}
\end{equation*}
\end{thm}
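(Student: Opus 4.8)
The plan is to prove the Burnside character formula by a standard argument in the representation theory of finite groups, working in the group algebra $\mathbb{C}[G]$ and using orthogonality of irreducible characters. First I would introduce, for each conjugacy class $C_i \subset G$, the class sum $z_i = \sum_{c \in C_i} c$, which lies in the center of $\mathbb{C}[G]$. The quantity we want to count, namely $\#\{(c_1,\ldots,c_k) : c_i \in C_i,\; c_1 c_2 \cdots c_k = 1\}$, is exactly the coefficient of the identity element $1 \in G$ in the central element $z_1 z_2 \cdots z_k$; equivalently, it equals $\frac{1}{|G|}\operatorname{tr}_{\mathrm{reg}}(z_1 \cdots z_k)$, where $\operatorname{tr}_{\mathrm{reg}}$ is the trace on the regular representation, since in the regular representation only the identity contributes to the trace and it contributes $|G|$.

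Next I would decompose the regular representation as $\bigoplus_\chi V_\chi^{\oplus \chi(1)}$, so that $\operatorname{tr}_{\mathrm{reg}}(z_1 \cdots z_k) = \sum_\chi \chi(1)\,\operatorname{tr}_{V_\chi}(z_1 \cdots z_k)$. The key classical fact I would invoke (proved via Schur's lemma, since each $z_i$ is central) is that $z_i$ acts on the irreducible module $V_\chi$ by the scalar $\omega_\chi(z_i) = \dfrac{|C_i|\,\chi(c_i)}{\chi(1)}$, where $c_i$ is any representative of $C_i$. Therefore $z_1 \cdots z_k$ acts on $V_\chi$ by the scalar $\prod_{i=1}^k \dfrac{|C_i|\,\chi(c_i)}{\chi(1)}$, and its trace on $V_\chi$ is $\chi(1)$ times that scalar. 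Substituting and collecting the powers of $\chi(1)$ then gives
\begin{equation*}
\#\{c_1 \cdots c_k = 1 \mid c_i \in C_i\} = \frac{1}{|G|}\sum_\chi \chi(1)\cdot \chi(1) \prod_{i=1}^k \frac{|C_i|\,\chi(c_i)}{\chi(1)} = \frac{|C_1|\cdots|C_k|}{|G|}\sum_\chi \frac{\chi(c_1)\cdots\chi(c_k)}{\chi(1)^{k-2}},
\end{equation*}
which is exactly \eqref{Burnside}.

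The one genuine technical point — the part I expect to be the main obstacle to write cleanly rather than conceptually hard — is the verification that the central idempotent/scalar-action formula $\omega_\chi(z_i) = |C_i|\chi(c_i)/\chi(1)$ holds: one shows $z_i$ acts as a scalar by Schur's lemma (it commutes with the $G$-action on the irreducible $V_\chi$), and then identifies the scalar by taking the trace of both sides, since $\operatorname{tr}_{V_\chi}(z_i) = \sum_{c \in C_i}\chi(c) = |C_i|\chi(c_i)$ while the scalar $\omega_\chi(z_i)$ contributes $\omega_\chi(z_i)\cdot \chi(1)$ to the trace. I would also note explicitly why the coefficient of $1$ in $z_1\cdots z_k$ equals $|G|^{-1}\operatorname{tr}_{\mathrm{reg}}$ of that element: in the regular representation every non-identity group element acts without fixed basis vectors and hence with zero trace, so only the identity term survives. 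With these two lemmas in place the computation is the short chain of equalities above, and no representation beyond the character table of $G$ is needed.
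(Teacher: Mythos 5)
Your argument is correct and complete: it is the classical proof of the Frobenius--Burnside counting formula via class sums. The paper itself states this theorem without proof, so there is no in-text argument to compare against; your route is the standard one. Each step checks out: the number of tuples $(c_1,\ldots,c_k)$ with $c_1\cdots c_k=1$ is the coefficient of the identity in $z_1\cdots z_k$, which equals $\frac{1}{|G|}\operatorname{tr}_{\mathrm{reg}}(z_1\cdots z_k)$; the decomposition $\operatorname{tr}_{\mathrm{reg}}=\sum_\chi \chi(1)\operatorname{tr}_{V_\chi}$ and the central-character evaluation $\omega_\chi(z_i)=|C_i|\chi(c_i)/\chi(1)$ (Schur's lemma plus taking traces, exactly as you indicate) then give $\frac{1}{|G|}\sum_\chi \chi(1)^2\prod_i\frac{|C_i|\chi(c_i)}{\chi(1)}$, which simplifies to the stated right-hand side with the exponent $\chi(1)^{k-2}$ accounted for correctly. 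The two lemmas you flag as the technical content are indeed the only nontrivial inputs, and both are standard; your proof needs no further repair.
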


\subsection{Eisenstein number of coverings and characters of \texorpdfstring{$S_n$}{}}

\begin{thm}
    The formula for Eisenstein number of coverings $\pi: X \rightarrow \mathbb{P}^{1}$ with prescribed ramification indices is as follows:

\begin{equation*}
\sum_{\pi: X \rightarrow \mathbb{P}^{1}} \frac{1}{|\operatorname{Aut} \pi|}=\frac{\left|C_{1}\right|\left|C_{2}\right| \ldots\left|C_{k}\right|}{(n !)^{2}} \sum_{\chi} \frac{\chi\left(c_{1}\right) \chi\left(c_{2}\right) \ldots \chi\left(c_{k}\right)}{\left(\chi(1)^{k-2}\right)} 
\end{equation*}
\end{thm}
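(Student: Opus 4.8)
The plan is to deduce this counting formula for the Eisenstein number directly from the Burnside formula \eqref{Burnside} by specializing to $G = S_n$ and then passing from a count of ordered tuples to a weighted count of coverings. The bridge between the two sides is the orbit-counting principle: $S_n$ acts by simultaneous conjugation on the set of solutions $\{(c_1,\dots,c_k) : c_i \in C_i,\ c_1 c_2 \cdots c_k = 1\}$, the orbits of this action are exactly the equivalence classes of coverings $\pi : X \to \mathbb{P}^1$ with the prescribed ramification indices, and the stabilizer of a solution is precisely the centralizer of the monodromy group $\langle c_1,\dots,c_k\rangle$, which by the definition given earlier is $\operatorname{Aut}\pi$.

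First I would invoke the orbit–stabilizer theorem: for a solution $s = (c_1,\dots,c_k)$ lying in the orbit corresponding to a covering $\pi$, the orbit has size $|S_n| / |\operatorname{Aut}\pi| = n!/|\operatorname{Aut}\pi|$. Summing over all solutions and grouping by orbit gives
\begin{equation*}
\#\{c_1 c_2 \cdots c_k = 1 \mid c_i \in C_i\} = \sum_{s} 1 = \sum_{\pi : X \to \mathbb{P}^1} \frac{n!}{|\operatorname{Aut}\pi|}.
\end{equation*}
Dividing both sides by $n!$ converts the left-hand side into the right-hand side of Burnside's formula divided by an extra factor of $n!$: since $|G| = n!$ there, we get
\begin{equation*}
\sum_{\pi : X \to \mathbb{P}^1} \frac{1}{|\operatorname{Aut}\pi|} = \frac{1}{n!}\cdot\frac{|C_1||C_2|\cdots|C_k|}{n!}\sum_{\chi}\frac{\chi(c_1)\chi(c_2)\cdots\chi(c_k)}{\chi(1)^{k-2}} = \frac{|C_1||C_2|\cdots|C_k|}{(n!)^2}\sum_{\chi}\frac{\chi(c_1)\chi(c_2)\cdots\chi(c_k)}{\chi(1)^{k-2}},
\end{equation*}
which is exactly the claimed identity, the sum now running over the irreducible characters $\chi$ of $S_n$.

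The step I expect to require the most care is justifying that the conjugation orbits of solutions biject with equivalence classes of coverings with the given ramification data, and that under this bijection the stabilizer is $\operatorname{Aut}\pi$. This is essentially a repackaging of the Riemann Existence Theorem correspondence already used in the proof of the dessin–Belyi correspondence (Theorem in \S2.3): a covering up to isomorphism is the same as a transitive monodromy homomorphism $\pi_1(\mathbb{P}^1 \setminus \{y_1,\dots,y_k\}) \to S_n$ up to conjugacy, equivalently a tuple $(c_1,\dots,c_k)$ with $\prod c_i = 1$ up to simultaneous conjugacy; and the centralizer of the image is the deck transformation group, i.e. $\operatorname{Aut}\pi$. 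One subtlety worth a remark is that Burnside's formula counts \emph{all} tuples, including those generating intransitive subgroups (disconnected coverings); one should note that the Eisenstein number on the left is understood to range over possibly-disconnected coverings as well, or else restrict to the transitive locus on both sides — either convention makes the identity hold verbatim, and I would state which is intended before concluding.
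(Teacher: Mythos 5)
Your proposal is correct and follows essentially the same route as the paper: both deduce the identity from Burnside's formula by observing that conjugation orbits of solution tuples correspond to coverings $\pi$, with stabilizer the centralizer $C(c_1,\dots,c_k)\cong\operatorname{Aut}\pi$, so each orbit contributes $n!/|\operatorname{Aut}\pi|$ solutions. Your closing remark about transitivity versus possibly-disconnected coverings is a small point of care that the paper's proof passes over silently, but it does not change the argument.
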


\begin{proof}
In view of \eqref{Burnside} it is sufficient to show that

\begin{equation*}
\#\left\{c_{1} c_{2} \ldots c_{k}=1 \mid c_{i} \in C_{i} \subset S_{n}\right\}=\sum_{\pi: X \rightarrow \mathbb{P}^{1}} \frac{n !}{\mid \text { Aut } \pi \mid} \
\end{equation*}

    A solution $\left\{c_{1}, c_{2}, \ldots, c_{k}\right\}$ of the equation above corresponds to a ramified covering $\pi: X \rightarrow \mathbb{P}^{1}$ and

$$
\text { Aut } \pi \cong C\left(c_{1}, c_{2}, \ldots, c_{k}\right) \text {, }
$$

where $C\left(c_{1}, c_{2}, \ldots c_{k}\right)$ is the centralizer of the set $\left\{c_{1}, c_{2}, \ldots, c_{k}\right\}$ in $S_{n}$. Hence the number of solutions conjugate to $\left\{c_{1}, c_{2}, \ldots, c_{k}\right\}$ is equal to

$$
\left[S_{n}: C\left(c_{1}, c_{2}, \ldots, c_{k}\right)\right]=\frac{n !}{\mid \text { Aut } \pi \mid}
$$

and the result follows.
\end{proof}

Now if we turn to Belyi coverings $\beta$ with 3 respective conjugacy classes of monodromy permutations for ramification points 0,1 and $\infty$, the formula above will be as follows:

$$
\sum_{\beta: S \rightarrow \mathbb{P}^{1}} \frac{1}{|\operatorname{Aut} \beta|}=\frac{\left|C_{0}\right|\left|C_{1}\right|\left|C_{\infty}\right|}{(n !)^{2}} \sum_{\chi} \frac{\chi\left(c_{1}\right) \chi\left(c_{2}\right) \chi\left(c_{3}\right)}{\chi(1)}
$$

More specifically, this formula for polynomial coverings of degree $n$ with ramification scheme $n, d^{\bullet}, d^{\circ}$ will be:

$$
\frac{\left|C_{n}\right|\left|C_{d^{\bullet}}\right|\left|C_{d^{\circ}}\right|}{(n !)^{2}} \sum_{\chi} \frac{\chi(n) \chi\left(d^{\bullet}\right) \chi\left(d^{\circ}\right)}{\chi(1)}
$$
or equivalently,

\begin{equation*}
\frac{1}{z\left(d^{\bullet}\right) z\left(d^{\circ}\right)} \sum_{0 \leq l \leq n}(-1)^{l} l !(n-l-1) ! \chi_{l}\left(d^{\bullet}\right) \chi_{l}\left(d^{\circ}\right)\,,
\end{equation*}

where $z\left(d^{\bullet}\right)$ is the order of centralizer of a permutation with cycle structure $d^{\bullet}$.

\section{Exceptional Belyi coverings}

\subsection{Rational exceptional Belyi coverings.}

\begin{dfn}
    Let $S_{g}$ be a compact Riemann surface with genus $g$ and

$$
\beta: S_{g} \rightarrow \mathbb{P}^{1}
$$

be a Belyi covering. $\beta$ is said to be an \textit{exceptional Belyi covering} if it is uniquely determined by its ramification scheme. When $g=0$ we call it \textit{rational exceptional Belyi covering}.
\end{dfn}

\begin{rem}
    There exists unique dessin with given degrees of vertices which corresponds to a rational exceptional Belyi covering.
\end{rem}

There are some examples illustrating these exceptional coverings: Klein studied the first three examples by classifying $G \subset \mathbb{P} G L(2, \mathbb{C})$ as cyclic, dihedral, cubic etc. where $G=\operatorname{Aut}\left(\mathbb{P}^{1}, \beta\right)$ in the natural projection $\mathbb{P}^{1} \rightarrow \mathbb{P}^{1} / G$.

\begin{example}(Cyclic Covering)(Infinite Series) 

This is a rational covering

$$
\begin{aligned}
\beta: \mathbb{P}^{1} & \rightarrow \mathbb{P}^{1} \\
z & \mapsto z^{n}
\end{aligned}
$$

The covering is called as ``cyclic" due to the fact that the group $G=\operatorname{Aut}\left(\mathbb{P}^{1}, \beta\right)$ in the natural projection $\mathbb{P}^{1} \rightarrow \mathbb{P}^{1} / G$ is cyclic.

The ramification scheme is of the form $[n][n] \underbrace{[1,1, \ldots, 1]}_{n \text { times }}$.

\end{example}

\begin{example} (Dihedral Covering)(Infinite Series)

$$
\beta: z \mapsto z^{n}+\frac{1}{z^{n}}
$$

The covering is called as ``dihedral" due to the fact that the group $G=\operatorname{Aut}\left(\mathbb{P}^{1}, \beta\right)$ in the natural projection $\mathbb{P}^{1} \rightarrow \mathbb{P}^{1} / G$ is dihedral.

The ramification scheme is $[p, p] \underbrace{[2,2, \ldots, 2]}_{n / 2 \text { times }} \underbrace{[2,2, \ldots, 2]}_{n / 2 \text { times }}$, where $2 p=n$.

\end{example}

The dessins for the cyclic and dihedral coverings are below.
\begin{figure}[htp]
\includegraphics[width=70mm]{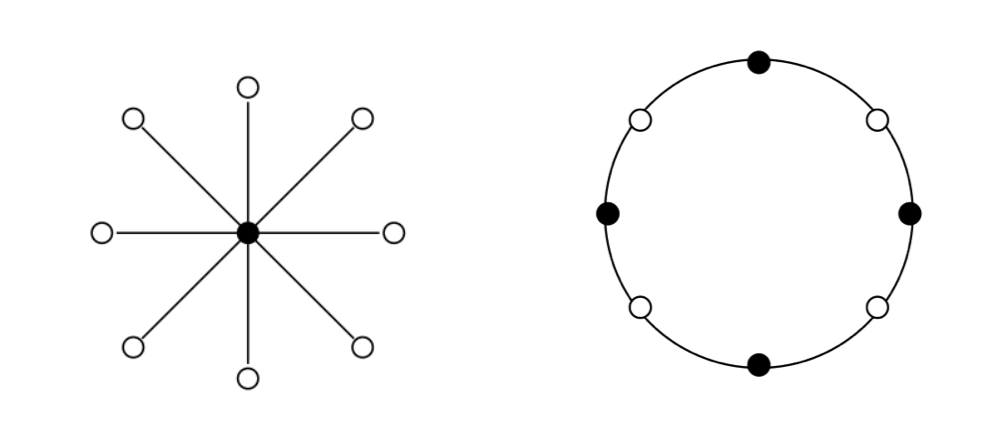}
\end{figure}

\begin{example}
    In 3D space, a \textit{Platonic solid} is a regular, convex polyhedron. The Belyi functions for platonic solids were computed by Felix Klein as stated in \cite{Klein}.

\begin{figure}[htp]
\includegraphics[width=130mm]{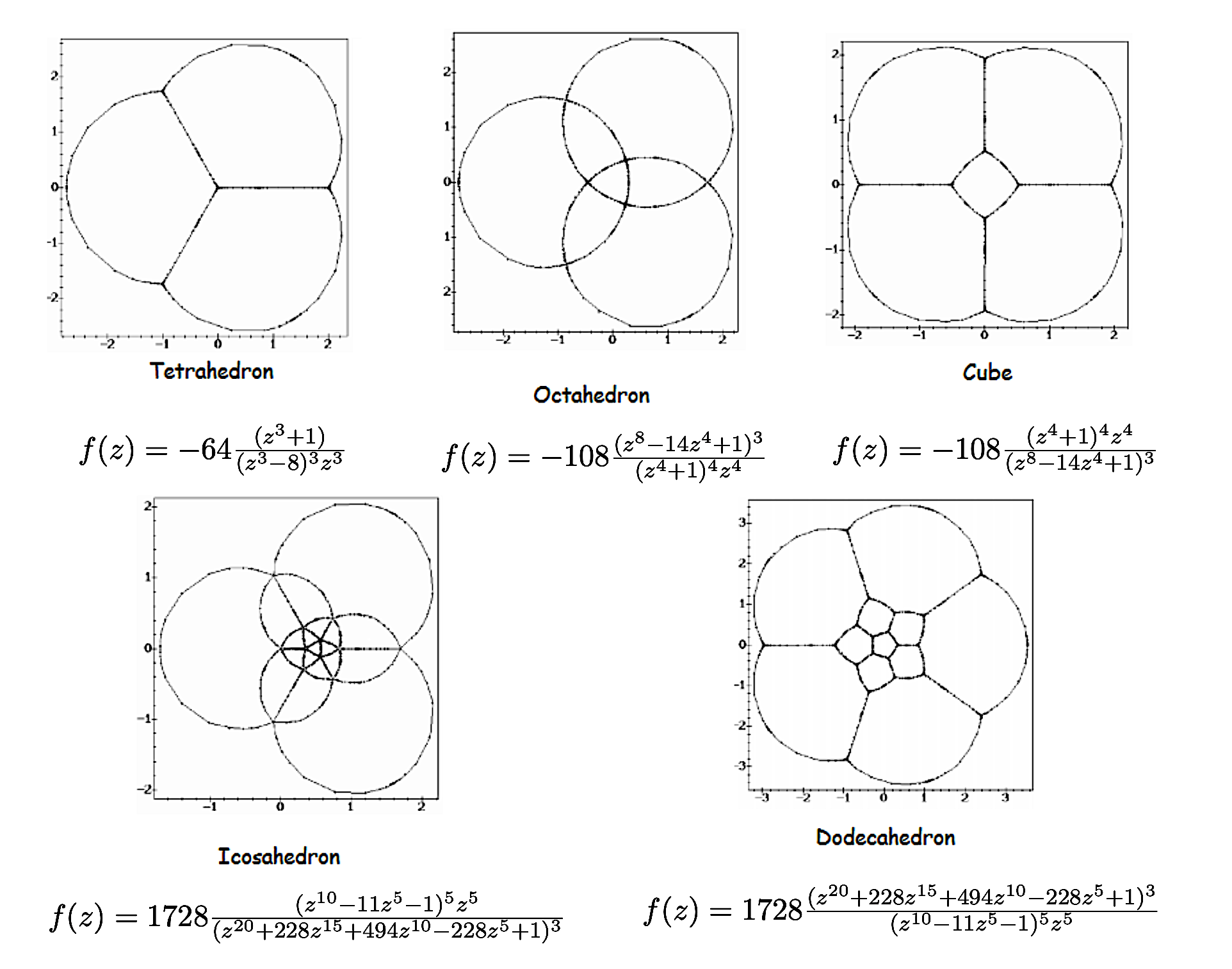}
\end{figure}

\end{example}
\vskip1pt
\begin{example}
     (Chebyshev Covering)(Infinite Series)

The $n$-th Chebyshev polynomial can be expressed as $T_{n}(\cos x)=\cos n x$. The ramification scheme is

$$
\begin{cases}{[n][1,2,2 \ldots, 2][1,2,2 \ldots, 2]} & ; n \text { is odd. } \\ {[n][1,1,2,2 \ldots, 2][2,2, \ldots, 2]} & ; n \text { is even. }\end{cases}
$$

\begin{figure}[htp]
\includegraphics[width=80mm]{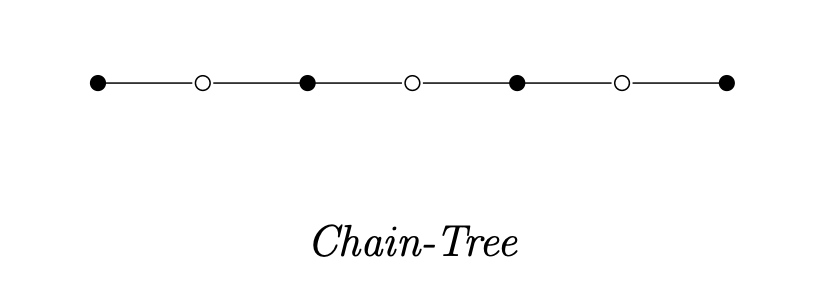}
\end{figure}

\end{example}

\begin{example}We found a new ``interpolating" series between Chebyshev and dihedral covering.

The ramification scheme for this series is

$$
\begin{cases}{[p, q][2,2 \ldots, 2,3][1,2,2, \ldots, 2]} & ; \mathrm{n}=\mathrm{p}+\mathrm{q} \text { is odd. } \\ {[p, q][1,2,2 \ldots, 2,3][2,2 \ldots, 2]} & ; \mathrm{n}=\mathrm{p}+\mathrm{q} \text { is even. }\end{cases}
$$
The respective Belyi function can be expressed as

$$
f(t)=e^{q t i}\left[\frac{\alpha-e^{t i}}{1-\alpha e^{t i}}\right]^{p}+e^{-q t i}\left[\frac{\alpha-e^{-t i}}{1-\alpha e^{-t i}}\right]^{d},
$$

where $\alpha=\frac{q+p}{q-p}$.\\

If $p=q$, this turns out to be a dihedral covering and if $q=0$, then it will be a Chebyshev covering.

\begin{figure}[htp]
\includegraphics[width=60mm]{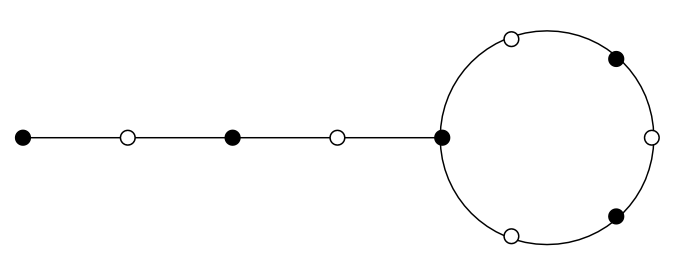}
\end{figure}
\end{example}

We used Maple to calculate the respective Belyi function along with the real graph and dessin when the degree is 9. The Belyi function for corresponding to the ramification scheme $[3,6][2,2,2,3][1,2,2,2,2]$ is
$$
f(t)=e^{6 t i}\left[\frac{3-e^{t i}}{1-3 e^{t i}}\right]^{3}+e^{-6 t i}\left[\frac{3-e^{-t i}}{1-3 e^{-t i}}\right]^{3}
$$
This function has a pole at $\frac{5}{3}$ $\Bigg($In general, the pole is at $\frac{n^{2}+d^{2}}{n^{2}-d^{2}}\Bigg)\,.$

\begin{figure}[htp]
\includegraphics[width=55mm]{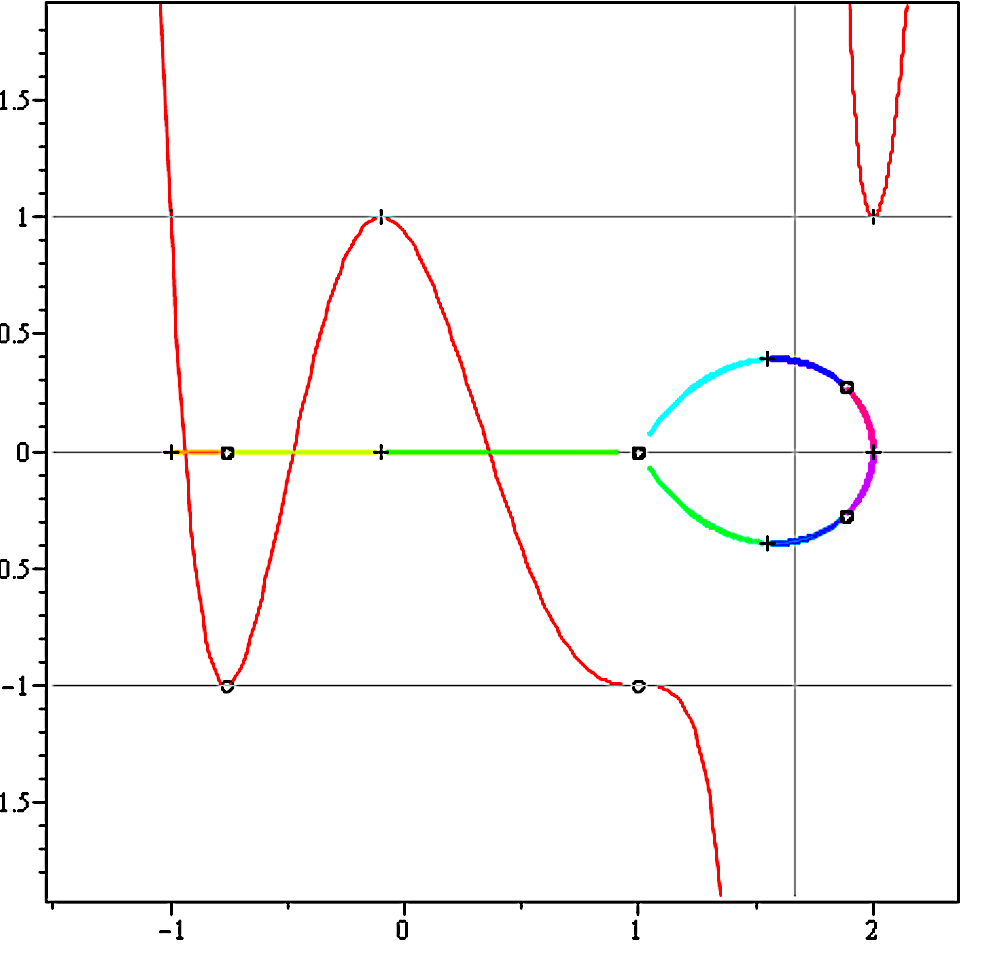}
\end{figure}  
\newpage

\begin{example}
    There is also another new series we found which is of odd degree.

The ramification scheme is $[1, p, p][\underbrace{2,2,2, \ldots, 2}_{p-1 \text { times }}, 3][2,2, \ldots, 2,3]$.

\begin{figure}[htp]
\includegraphics[width=60mm]{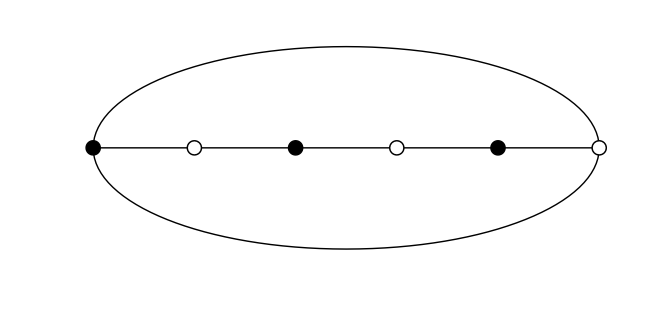}
\end{figure}  

Using Maple for degree 13, we have the very complicated Belyi function for the ramification scheme
$[1,6,6][2,2,2,2,2,3][2,2,2,2,2,3]$ 
\begin{figure}[htp]
\includegraphics[width=150mm]{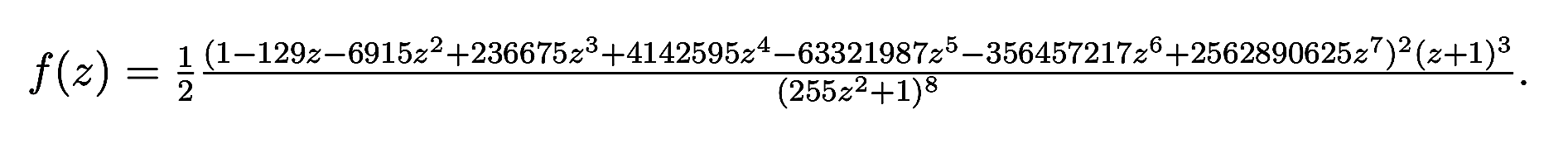}
\end{figure}  

\begin{figure}[htp]
\includegraphics[width=60mm]{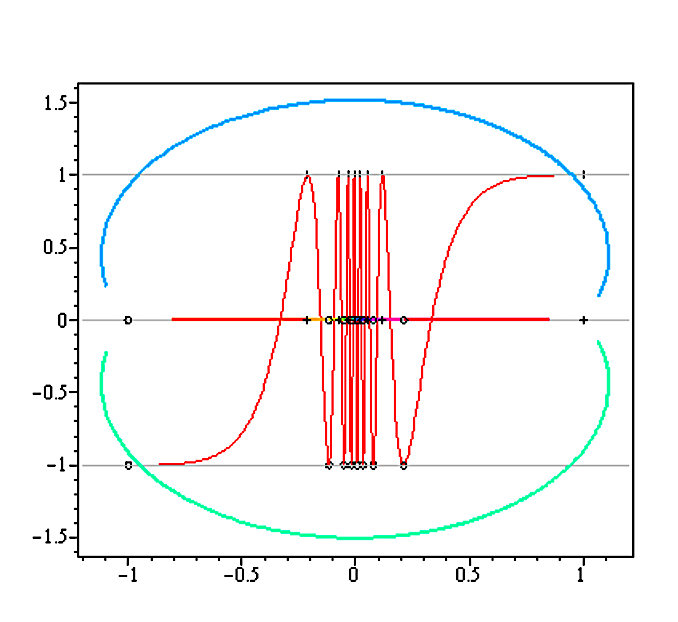}
\end{figure}  

\end{example}
\newpage
\begin{example}
    There are more exceptional series for which we do not know the respective Belyi covering. For example, the covering with the ramification scheme 
    $$[2, p, p][3,3,2,, 2 \ldots, 2][2,2, \ldots, 2]\,.$$ is one of them.
\begin{figure}[htp]
\includegraphics[width=110mm]{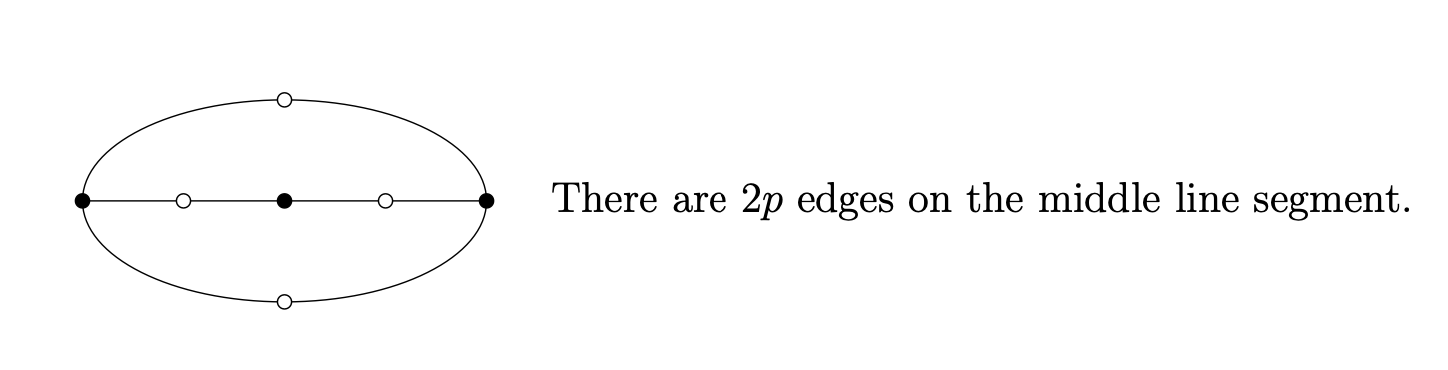}
\end{figure}    
\end{example}
\subsection{Classification of rational exceptional polynomial coverings}

Dessin d'enfants corresponding to polynomial coverings are trees. \\

(1)
\begin{figure}[!htb]
\includegraphics[width=150mm]{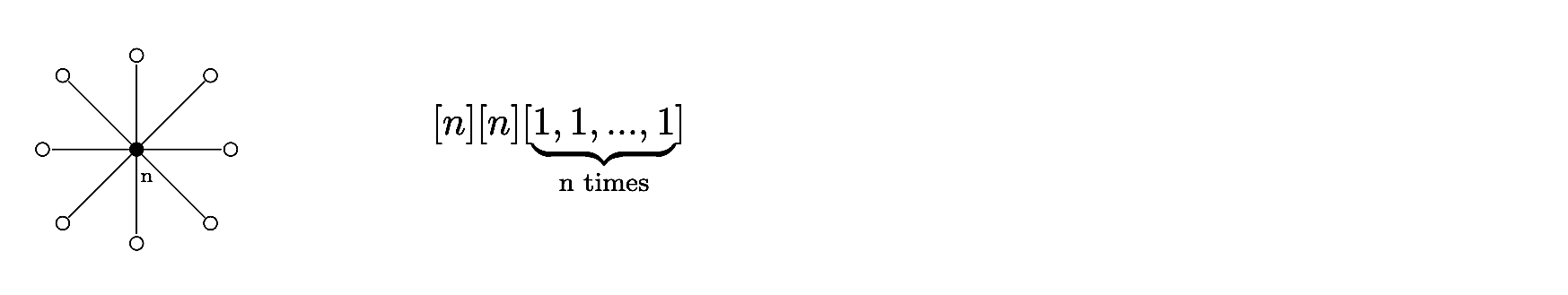}
\end{figure}

(2)\\
\begin{figure}[!htb]
\includegraphics[width=140mm]{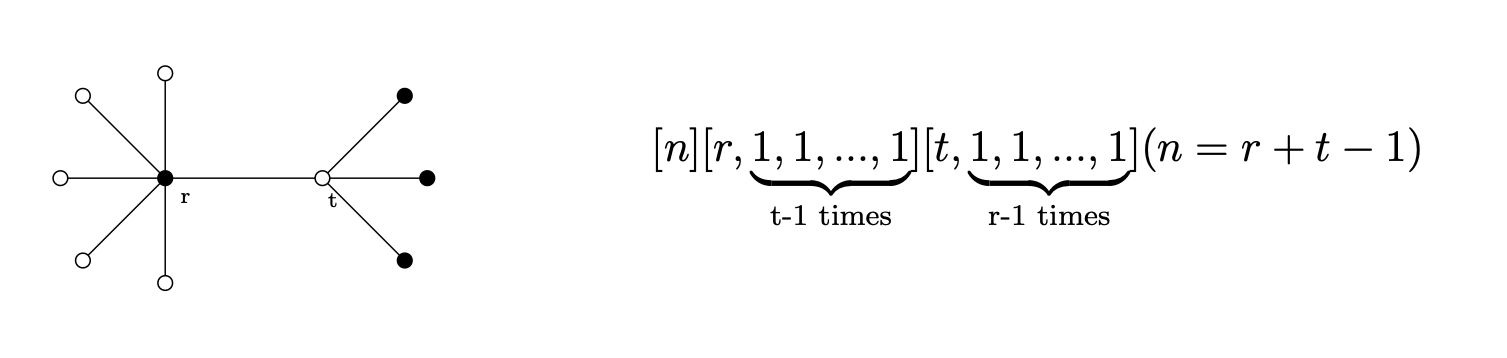}
\end{figure} 

(3)\\
\begin{figure}[!htb]
\includegraphics[width=140mm]{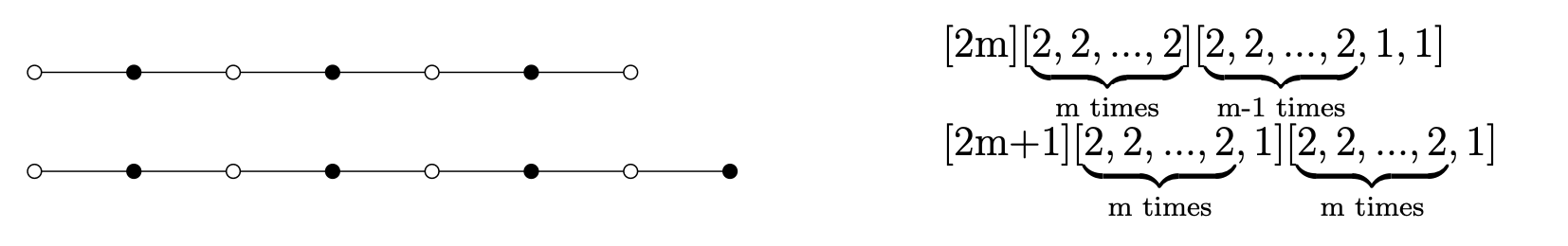}
\end{figure}

\newpage
(4)
\begin{figure}[!htb]
\includegraphics[width=140mm]{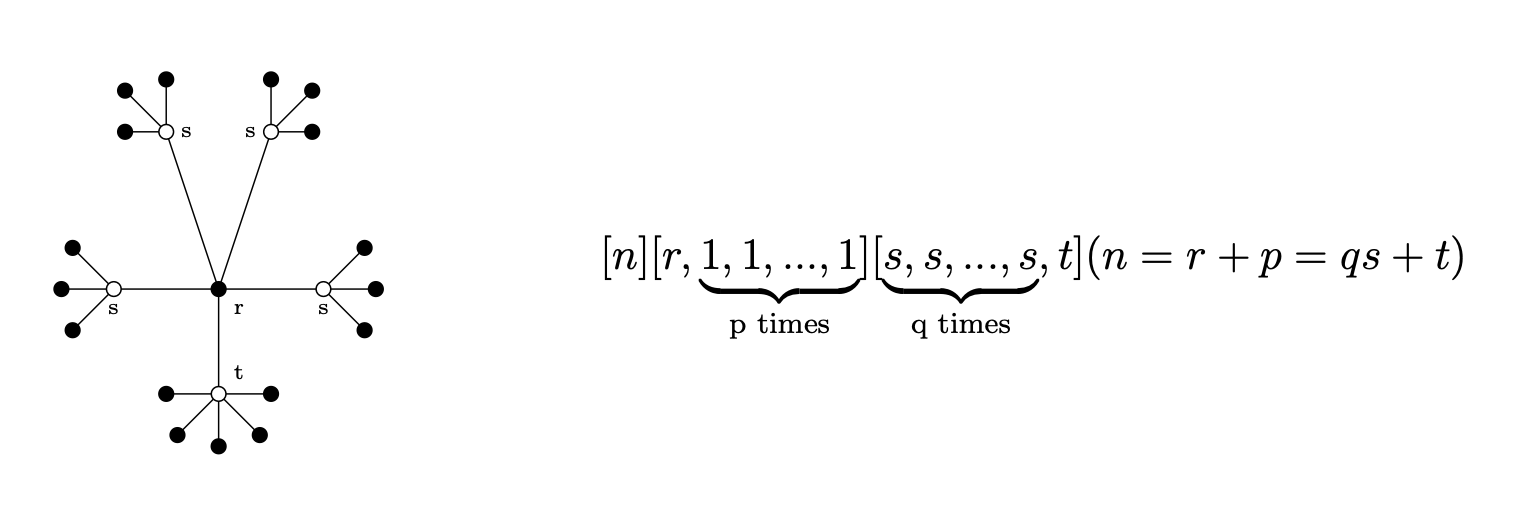}
\end{figure}

(5)
\begin{figure}[!htb]
\includegraphics[width=140mm]{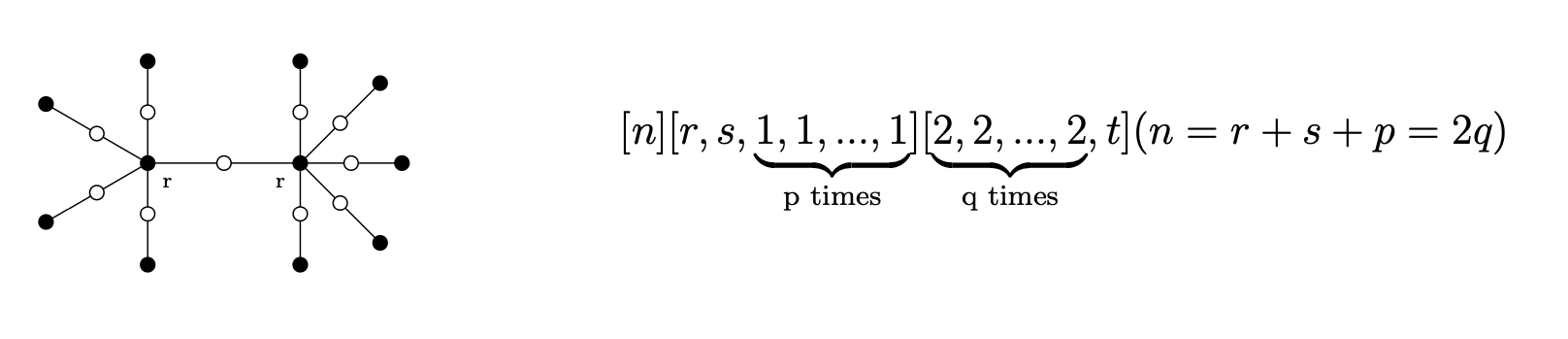}
\end{figure}

(6)
\begin{figure}[!htb]
\includegraphics[width=140mm]{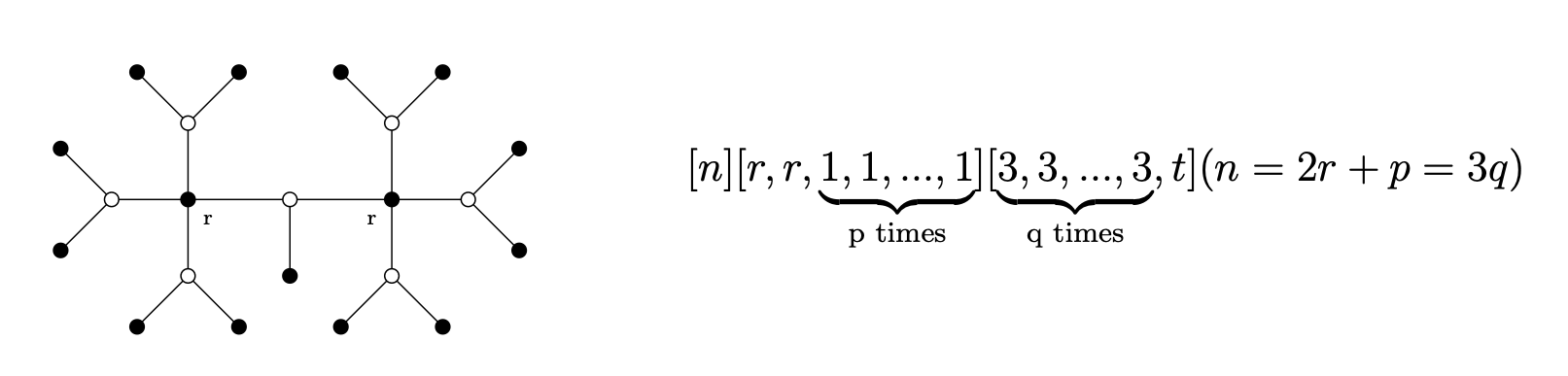}
\end{figure}

(7)
\begin{figure}[!htb]
\includegraphics[width=140mm]{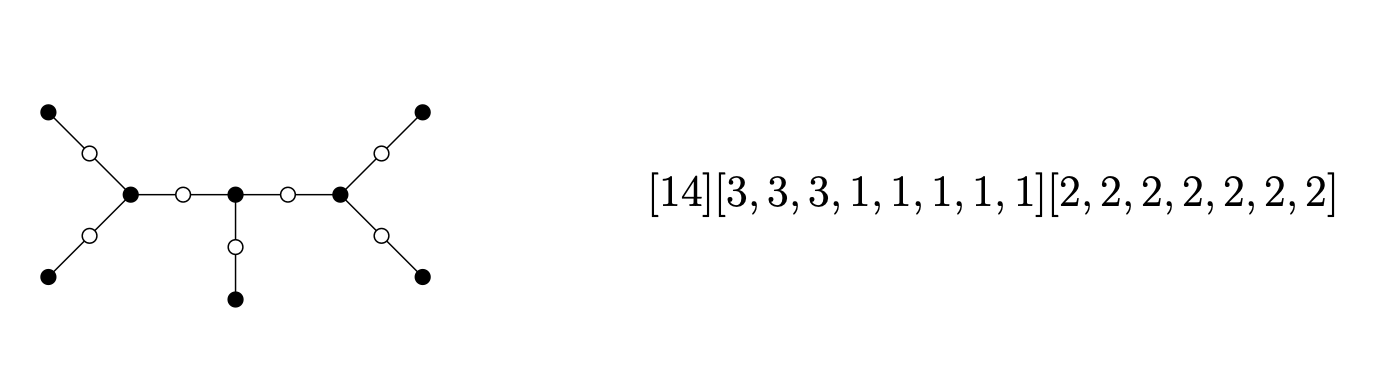}
\end{figure}

\newpage
\subsection{Counting rational exceptional Belyi coverings}

Recall the formulas from Chapter 3:

\begin{equation*}
\sum_{\beta: S \rightarrow \mathbb{P}^{1}} \frac{1}{|\operatorname{Aut} \beta|}=\frac{\left|C_{0}\right|\left|C_{1}\right|\left|C_{\infty}\right|}{(n !)^{2}} \sum_{\chi: \text { irreducible }} \frac{\chi\left(c_{1}\right) \chi\left(c_{2}\right) \chi\left(c_{3}\right)}{\chi(1)} \tag{5.1}
\end{equation*}

where $\chi\left(c_{i}\right)$ are the irreducible characters of permutations $c_{i}$ in the ramification scheme and $\left|C_{i}\right|$ are the size of conjugacy classes with representatives $c_{i}$. Also, we stated Tutte formula for bicolored trees(corresponding to polynomial coverings):

$$
\sum_{T} \frac{1}{|\operatorname{Aut} T|}=\frac{1}{\sigma^{\bullet} \sigma^{\circ}}\left(\begin{array}{l}
\sigma^{\bullet}  \\
q^{\bullet}
\end{array}\right)\left(\begin{array}{l}
\sigma^{\circ} \\
q^{\circ}
\end{array}\right)
$$

where $q_{i}^{\bullet}, q_{i}^{\circ}$ is the number of black and white vertices of degree $i$ respectively and $\sigma^{\bullet}=\sum_{i} q_{i}^{\bullet}, \sigma^{\circ}=\sum_{j} q_{j}^{\circ}$. \\

The sums on the left-hand side in two formulas above simply reduces to the inverse of an integer when the case is exceptional Belyi coverings: By definition, these Belyi coverings are unique, so the Eisenstein number will be $\frac{1}{|\operatorname{Aut} \beta|}$ and $\frac{1}{|\operatorname{Aut} T|}$.
\subsection{Fields of definition of exceptional Belyi coverings
}

\begin{dfn}
 Let $S$ be a compact Riemann surface and $\beta: S \rightarrow \mathbb{P}^{1}$ be a Belyi covering. A \textit{field of definition} of a Belyi pair $(S, \beta)$, or a dessin denfant, is a number field $K$ such that both the algebraic curve $C$ (corresponding to $S$ ) and the Belyi function $\beta$ can be defined with coefficients in $K$.
\end{dfn}

\begin{rem}
    A dessin can have many fields of definition: If some $K$ is a field of definition, every field containing it is also a field of definition.
\end{rem}

\begin{thm}
    The field of definition of an exceptional rational Belyi coverings is either $\mathbb{Q}$ or a quadratic extension of $\mathbb{Q}\,.$
\end{thm}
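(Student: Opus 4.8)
The plan is to separate the claim into two steps: first locate the \emph{field of moduli} of the covering, then control how far a field of definition can sit above it.

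\textbf{Step 1 (the field of moduli is $\mathbb{Q}$).} Let $\beta\colon\mathbb{P}^1\to\mathbb{P}^1$ be a rational exceptional Belyi covering, with dessin $\mathcal{D}$ and ramification scheme $[\lambda_\infty][\lambda_0][\lambda_1]$. For $\sigma\in\Gamma=\operatorname{Gal}(\overline{\mathbb{Q}}/\mathbb{Q})$ the conjugate covering $\beta^\sigma$ corresponds to the dessin $\mathcal{D}^\sigma$; by the $\Gamma$-invariance theorem for dessins (the numbers and the degrees of the black vertices, white vertices and faces are all preserved) the covering $\beta^\sigma$ again has ramification scheme $[\lambda_\infty][\lambda_0][\lambda_1]$. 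Since $\beta$ is exceptional it is the unique Belyi covering with this ramification scheme up to equivalence, so $\beta^\sigma$ is equivalent to $\beta$ for every $\sigma\in\Gamma$. Hence $\Gamma$ fixes the Belyi pair $(\mathbb{P}^1,\beta)$ and its field of moduli is $\mathbb{Q}$.

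\textbf{Step 2 (from moduli to definition).} It remains to bound the field of definition of $(\mathbb{P}^1,\beta)$ over its field of moduli $\mathbb{Q}$. I would run Weil's Galois-descent machine: for each $\sigma$ pick an isomorphism of Belyi pairs $\gamma_\sigma\colon(\mathbb{P}^1,\beta)^\sigma\to(\mathbb{P}^1,\beta)$, which exists by Step 1 and is canonical only up to the automorphism group $A=\operatorname{Aut}(\mathbb{P}^1,\beta)$; the obstruction to choosing the $\gamma_\sigma$ so that they form a descent datum is the usual class measured by $A$. For exceptional coverings $A$ is precisely the group in Klein's trichotomy — cyclic, dihedral, or polyhedral — hence typically nontrivial, and this is where the cited theorems of Serre (\cite{Serre2}, \cite{Serre3}) enter: they let one descend $(\mathbb{P}^1,\beta)$ to a pair $(C,\beta_C)$ over $\mathbb{Q}$ in which $C$ is an \emph{a priori} possibly nonsplit smooth conic (a twisted form of $\mathbb{P}^1$) and $\beta_C\colon C\to\mathbb{P}^1_{\mathbb{Q}}$ base-changes to $\beta$. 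A smooth conic over $\mathbb{Q}$ is either $\mathbb{P}^1_{\mathbb{Q}}$, in which case $\beta$ is already defined over $\mathbb{Q}$, or becomes $\mathbb{P}^1$ over some quadratic extension $K/\mathbb{Q}$, in which case $\beta$ is defined over $K$; in both cases the field of definition is $\mathbb{Q}$ or a quadratic extension of $\mathbb{Q}$.

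\textbf{Main obstacle.} Step 1 is formal once the $\Gamma$-invariance theorem is granted; the real content is Step 2 — showing that the automorphisms $A$ of the Belyi pair cannot push the field of definition beyond degree $2$. The point is that the genus-$0$ hypothesis collapses every surviving obstruction to a quaternion-algebra (conic) class, which is $2$-torsion in $\operatorname{Br}(\mathbb{Q})$ and so is killed by a single quadratic extension; this reduction is exactly what Serre's theorems provide. As a sanity check I would note that in all the cases computed in this paper the conic $C$ is split — usually because $\mathcal{D}$ exhibits a $\Gamma$-stable point of $\mathbb{P}^1$, e.g.\ a totally ramified point in the polynomial case — which is why every exceptional covering tabulated below is in fact defined over $\mathbb{Q}$.
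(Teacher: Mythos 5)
The paper never actually proves this theorem --- it is stated bare, with only the introduction's pointer to Serre's results --- so there is no proof of record to compare yours against; your reconstruction is almost certainly the intended argument. Your Step 1 is correct and complete: Galois conjugation preserves the ramification scheme and the genus (the invariance theorem of Section 2.4), exceptionality then forces $\beta^\sigma\cong\beta$ for every $\sigma$, so the field of moduli is $\mathbb{Q}$. This is the only place the hypothesis ``exceptional'' enters, and you use it correctly.

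The genuine gap is in Step 2, exactly where you flag it. When $A=\mathrm{Aut}(\mathbb{P}^1,\beta)$ is trivial the argument is airtight: the isomorphisms $\gamma_\sigma$ are unique, so $\sigma\mapsto\gamma_\sigma$ is automatically a $1$-cocycle with values in $\mathrm{PGL}_2(\overline{\mathbb{Q}})$, its class in $H^1\bigl(\Gamma,\mathrm{PGL}_2\bigr)$ is a conic (a quaternion class, $2$-torsion in the Brauer group), and a conic splits over a quadratic field. But when $A\neq 1$ the cochain is only well defined modulo $A$, the coboundary $\gamma_\sigma\,{}^\sigma\gamma_\tau\,\gamma_{\sigma\tau}^{-1}$ lands in $A$ rather than vanishing, and the obstruction is a priori a nonabelian $2$-cocycle valued in $A$, not a conic class; your sentence ``the genus-$0$ hypothesis collapses every surviving obstruction to a quaternion-algebra class'' is the entire content of the theorem in this case and is asserted rather than proved. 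Note that $A\neq 1$ is not a marginal case here: it is precisely the cyclic, dihedral, Chebyshev and polyhedral families that populate the paper's tables. The cleanest repair is to treat $A\neq 1$ separately: $A$ is a finite subgroup of $\mathrm{PGL}_2(\mathbb{C})$, hence cyclic, dihedral or polyhedral by Klein's classification, $\beta$ factors as $\bar{\beta}\circ\pi_A$ through the quotient map $\pi_A\colon\mathbb{P}^1\to\mathbb{P}^1/A$, and each $\pi_A$ admits an explicit model over $\mathbb{Q}$, after which one descends the (now automorphism-free) map $\bar{\beta}$ by the conic argument. Alternatively, quote the precise descent statement for genus-$0$ mere covers from the literature (D\`ebes--Emsalem, Couveignes) instead of gesturing at it.
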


\vskip5pt

\subsection{Maple calculations for rational exceptional Belyi coverings.}
We developed a Maple algorithm finding all exceptional Belyi coverings with a given genus and degree. This allows us to classify them up to degree 15. This code could be found on the next page. We were able to calculate the respective Belyi functions up to degree 6 completely in addition to some of degree 7. The ramification schemes, Belyi functions and dessins d'enfants can be seen in the table attached. More sample Maple codes based on ``fundamental identities" to find Belyi functions can be found in \cite{Kurk}.  Some rational coverings come from modular curves; e.g. R6.11 in the table. This is the only one we found whose field of definition is different than $\mathbb{Q};$ it is $\mathbb{Q}(\sqrt{10})\,.$ 

\includepdf[pages=-]{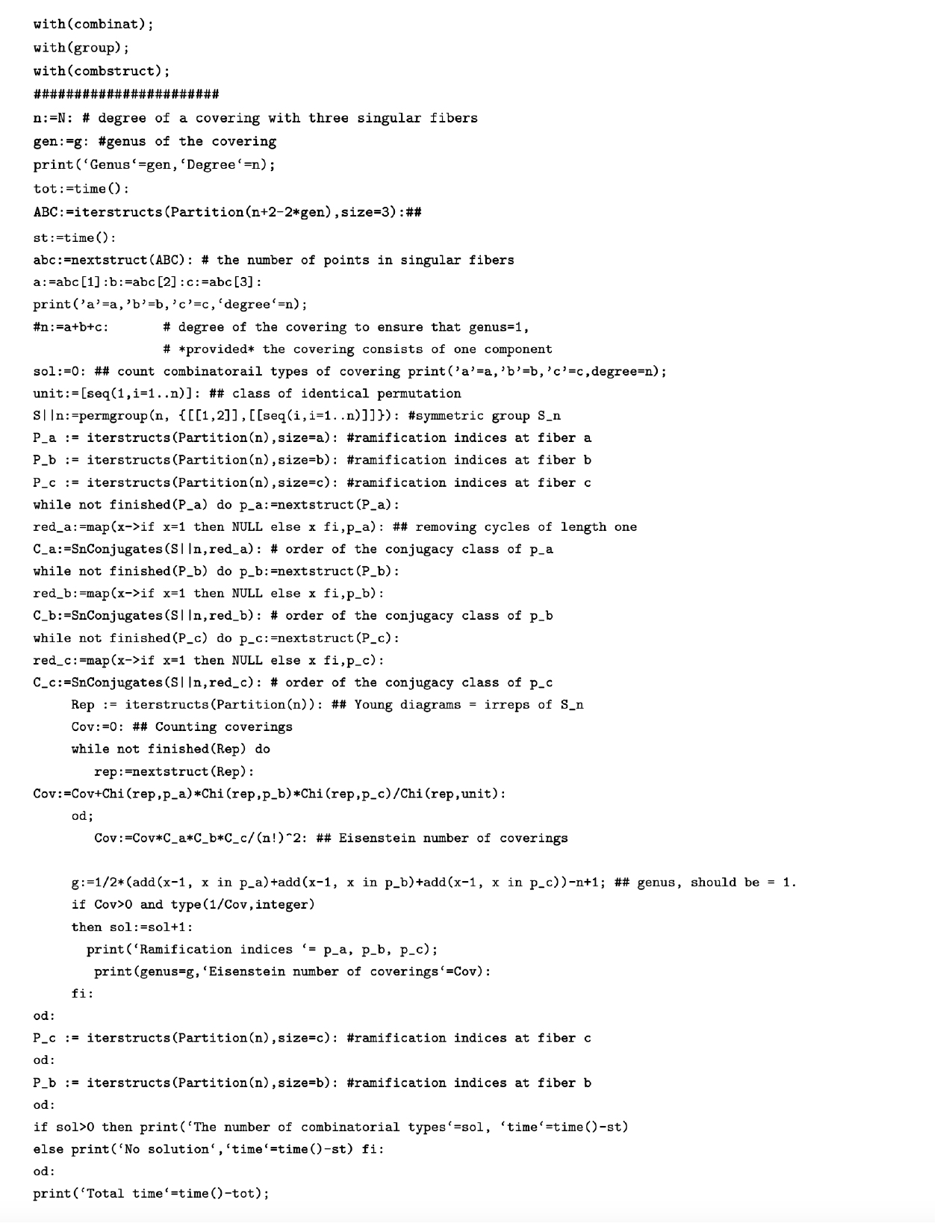}

\includepdf[pages=-]{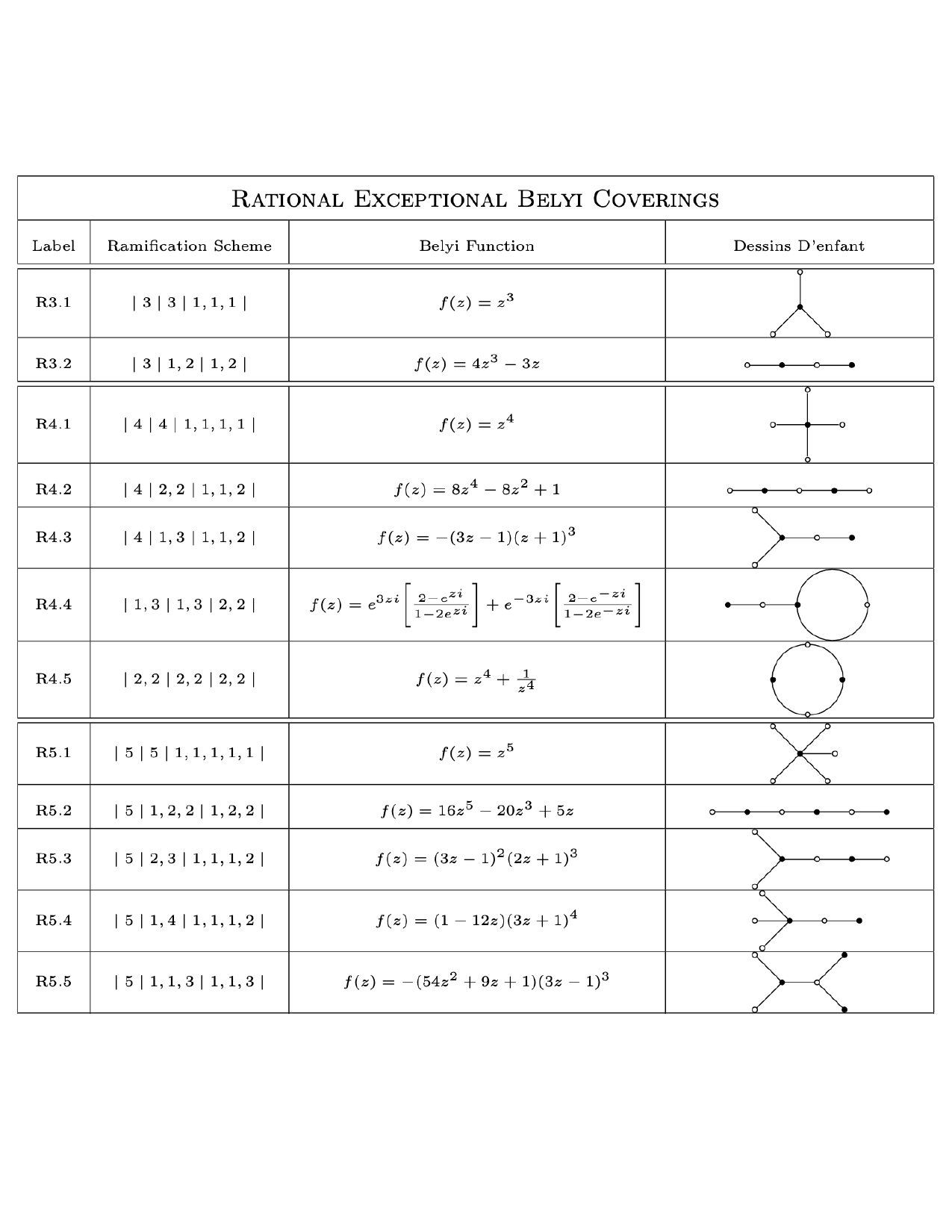}

\subsection{Acknowledgements}
I thank my supervisor Professor Alexander Klyachko for his valuable support and guidance.
\bibliographystyle{alpha}
\bibliography{mybib}

\begin{thebibliography}{G{\'a}D11}

\bibitem[Adr09]{A09a}
N.M. Adrianov.
\newblock On plane trees with a prescribed number of valency set realizations.
\newblock {\em J. Math. Sci.}, 158(1):5--10, 2009.

\bibitem[Bel80]{Belyi}
G.V. Bely{\u i}.
\newblock On galois extensions of a maximal cyclotomic field.
\newblock {\em Math. USSR Izv.}, 14(2):247--256, 1980.

\bibitem[BZ96]{BetZvo}
J.~B{\'e}tr{\'e}rna and A.~Zvonkin.
\newblock Plane trees and shabat polynomials.
\newblock {\em Disc. Math.}, 153(1-3):47--58, 1996.

\bibitem[CG94]{CouGran}
J.M. Couveignes and L.~Granboulan.
\newblock Dessins from a geometric point of view, the grothendieck theory of dessins d'enfants.
\newblock {\em London Math. Soc. Lecture Note Ser.}, 200:79--113, 1994.

\bibitem[F.K07]{Klein}
F.Klein.
\newblock Lectures on the icosahedron and the solution of the fifth degree.
\newblock 2007.

\bibitem[G{\'a}D11]{GirGon}
E.~Girondo and G.~Gonz {\'a}lez Diez.
\newblock ntroduction to compact riemann surfaces and dessins d’enfants.
\newblock {\em London Math. Soc. Student Texts}, 79, 2011.

\bibitem[Gro03]{Groth}
A.~Grothendieck.
\newblock Sketch of a programme.
\newblock 2003.

\bibitem[KK98]{KlyKur}
A.~Klyachko and E.~Kurtaran.
\newblock Some identities and asymptotics for characters of the symmetric group.
\newblock {\em J. Algebra}, 206(2):413--437, 1998.

\bibitem[K{\"o}c04]{Kock}
B.~K{\"o}ck.
\newblock Belyi's theorem revisited.
\newblock 45(1):253--265, 2004.

\bibitem[K{\"u}r15]{Kurk}
C.~K{\"u}rkoglu.
\newblock Exceptional belyi coverings.
\newblock {\em ProQuest Publishing}, 2015.

\bibitem[LS04]{LieSha}
M.~W. Liebeck and A.~Shalev.
\newblock Fuchsian groups, coverings of riemann surfaces, subgroup growth, random quotients and random walks.
\newblock {\em J. Algebra}, 276(2):552--601, 2004.

\bibitem[LZ04]{LanZvo}
S.~Lando and A.~Zvonkin.
\newblock Graphs on surfaces and their applications.
\newblock 141:xv+455, 2004.

\bibitem[MZ00]{MagZvo}
N.~Magot and A.~Zvonkin.
\newblock Belyi functions for archimedean solids.
\newblock {\em Disc. Math.}, 217:249--271, 2000.

\bibitem[Ser73]{Serre2}
J.P. Serre.
\newblock A course in arithmetic.
\newblock 7:ix+119, 1973.

\bibitem[Ser97]{Serre3}
J.P. Serre.
\newblock Galois cohomology.
\newblock pages x+212, 1997.

\bibitem[Sha94]{Sha}
G.~Shabat.
\newblock On the classification of plane trees by their galois orbits in the grothendieck theory of dessins d’enfants.
\newblock {\em London Math. Soc. Lecture Note Ser.}, 200:169--177, 1994.

\bibitem[SS03]{SinSyd}
D.~Singerman and R.I. Syddall.
\newblock The riemann surface of a uniform dessin.
\newblock 44(2):413--430, 2003.

\bibitem[SV90]{ShaVoe}
G.V. Shabat and V.A. Voevodsky.
\newblock Drawing curves over number fields.
\newblock 88:199--227, 1990.

\bibitem[Tut64]{Tut}
W.~T. Tutte.
\newblock The number of planted plane trees with a given partition.
\newblock 71(3):272--277, 1964.

\bibitem[Zvo]{Zvo}
A.~Zvonkin.
\newblock Belyi functions:examples, properties, and applications.

\end{thebibliography}

\end{document}